\newtheorem{lma}{Lemma}[section]
\newaliascnt{thmCt}{lma}
\newtheorem{thm}[thmCt]{Theorem}
\newaliascnt{corCt}{lma}
\newtheorem{cor}[corCt]{Corollary}
\newaliascnt{propCt}{lma}
\newtheorem{prop}[propCt]{Proposition}
\newtheorem*{thm*}{Theorem}
\newtheorem*{cor*}{Corollary}
\newtheorem*{prop*}{Proposition}
\theoremstyle{definition}
\newaliascnt{prgCt}{lma}
\newtheorem{prg}[prgCt]{}
\newaliascnt{dfnCt}{lma}
\newtheorem{dfn}[dfnCt]{Definition}
\newaliascnt{rmkCt}{lma}
\newtheorem{rmk}[rmkCt]{Remark}
\newaliascnt{rmksCt}{lma}
\newaliascnt{ntnCt}{lma}
\newaliascnt{qstCt}{lma}
\newaliascnt{prblCt}{lma}
\newaliascnt{exaCt}{lma}
\newcommand{\N}{\mathbb{N}}
\newcommand{\Z}{\mathbb{Z}}
\newcommand{\K}{\mathrm{K}}
\DeclareMathOperator{\im}{im}
\DeclareMathOperator{\her}{her}
\newcommand{\CatCa}{C^*}
\newcommand{\CatPoM}{\mathrm{PoM}}
\newcommand{\CatoM}{\mathrm{Mon}_\leq}
\DeclareMathOperator{\sr}{sr}
\DeclareMathOperator{\Lat}{Lat}
\DeclareMathOperator{\Idl}{Idl}
\DeclareMathOperator{\AbGp}{AbGp}
\DeclareMathOperator{\Cu}{Cu}
\newcommand{\hooklongrightarrow}{\lhook\joinrel\longrightarrow}
\begin{document}
\onehalfspacing
\title{Unitary Cuntz semigroups of ideals and quotients}

\author{Laurent Cantier}

\address{Laurent Cantier,
Departament de Matem\`{a}tiques \\
Universitat Aut\`{o}noma de Barcelona \\
08193 Bellaterra, Barcelona, Spain
}
\email[]{lcantier@mat.uab.cat}

\keywords{Unitary Cuntz semigroup, $C^*$-algebras, $\Cu^\sim$-ideals, Exact sequences}
\thanks{The author was supported by MINECO through the grant BES-2016-077192 and partially supported by the grants MDM-2014-0445 and MTM-2017-83487 at the Centre de Recerca Matem\`atica in Barcelona.}

\begin{abstract}
We define a notion of ideal for objects in the category of abstract unitary Cuntz semigroups  introduced in \cite{C20one} and termed $\Cu^\sim$. We show that the set of ideals of a $\Cu^\sim$-semigroup has a complete lattice structure. In fact, we prove that for any $\CatCa$-algebra of stable rank one $A$, the assignment $I\longmapsto\Cu_1(I)$ defines a complete lattice isomorphism between the set of ideals of $A$ and the set of ideals of its unitary Cuntz semigroup $\Cu_1(A)$. Further, we introduce a notion of quotients and exactness for the (non abelian) category $\Cu^\sim$. We show that $\Cu_1(A)/\Cu_1(I)\simeq \Cu_1(A/I)$ for any ideal $I$ in $A$ and that the functor $\Cu_1$ is exact. Finally, we link a $\Cu^\sim$-semigroup with the $\Cu$-semigroup of its positive elements and the abelian group of its maximal elements in a split-exact sequence. This result allows us to extract additional information that lies within the unitary Cuntz semigroup of a $\CatCa$-algebra of stable rank one.
\end{abstract}
\maketitle

\section{Introduction}
In the last decade, the Cuntz semigroup has emerged as a suitable invariant in the classification of non-simple $\CatCa$-algebras. It is now well-established that this positively ordered monoid is a continuous functor from the category of $\CatCa$-algebras to the category of abstract Cuntz semigroups, written $\Cu$ (see \cite{CEI08} and \cite{APT14}). Moreover, an abstract notion of ideals and quotients in the category $\Cu$ has been considered in \cite{CRS10} and it has been proved that the Cuntz semigroup nicely captures the lattice of ideals of a $\CatCa$-algebra $A$, that we write $\Lat(A)$. In fact, for any $\CatCa$-algebra, the assignment $I\longmapsto \Cu(I)$ defines a complete lattice isomorphism between $\Lat(A)$ and the set of ideals of $\Cu(A)$, that we write $\Lat(\Cu(A))$ (see \cite[\S 5.1.6]{APT14}). These results make the Cuntz semigroup a valuable asset whenever considering non-simple $\CatCa$-algebras. 
While the Cuntz semigroup has already provided notable results for classification (see e.g. \cite{RobNCCW1}, \cite{RS09}), one often has to restrict itself to the case of trivial $\K_1$ since the Cuntz semigroup fails to capture the $\K_1$-group information of a $\CatCa$-algebra.
To address this issue, the author has introduced a unitary version of the Cuntz semigroup for $\CatCa$-algebras of stable rank one, written $\Cu_1$ (see \cite{C20one}). This invariant, built from pairs of positive and unitary elements, resembles the construction of the Cuntz semigroup and defines a continuous functor from the category of $\CatCa$-algebra of stable rank one to the category $\Cu^\sim$ of (not necessarily positively) ordered monoids satisfying the order-theoretic axioms (O1)-(O4) introduced in \cite{CEI08}. 

In this paper, we investigate further this new construction and we affirmatively answer the question whether this unitary version of the Cuntz semigroup also captures the lattice of ideals of a $\CatCa$-algebra of stable rank one. We specify that the category $\Cu^\sim$ does not require the underlying monoids to be positively ordered, which hinders the task to generalize notions introduced in the category $\Cu$. For instance,  we cannot characterize a $\Cu^\sim$-ideal of a countably-based $\Cu^\sim$-semigroup by its largest element, as is done for countably-based $\Cu$-semigroups, since such an element might not exist in general. As a result, two axioms, respectively named (PD), for \emph{positively directed} and (PC), for \emph{positively convex} appear as far as the definition of a $\Cu^\sim$-ideal is concerned. The axiom (PD) has already been introduced in \cite{C20one}, where the author has established that any positively directed $\Cu^\sim$-semigroup $S$ either has maximal elements forming an absorbing abelian group, termed $S_{max}$, or else has no maximal elements. We finally point out that any $\Cu$-semigroup $S$ satisfies these axioms and that the generalization of a $\Cu^\sim$-ideal matches with the usual definition of a $\Cu$-ideal for any $\Cu$-semigroup $S$. In the course of this investigation, we also show that the functor $\Cu_1$ satisfies expected properties regarding ideals, quotients and exact sequences. These results help us to dig in depth the functorial relations between $\Cu,\K_1$ and $\Cu_1$ found in \cite[\S 5]{C20one}. 

More concretely, this paper shows that the set of $\Cu^\sim$-ideals of such a $\Cu^\sim$-semigroup $S$ is a complete lattice naturally isomorphic to the complete lattice of $\Cu$-ideals of its positive cone $S_+$. Furthermore, we prove that:

\begin{thm}
For any $\CatCa$-algebra $A$ of stable rank one, the unitary Cuntz semigroup $\Cu_1(A)$ is positively directed and positively convex.

Moreover, the assignment $I\longmapsto \Cu_1(I)$ defines a complete lattice isomorphism between $\Lat(A)$ and $\Lat(\Cu_1(A))$ that maps the sublattice $\Lat_f(A)$ of ideals in $A$ that contain a full, positive element onto the sublattice $\Lat_f(\Cu_1(A))$ of ideals in $\Cu_1(A)$ that are singly-generated by a positive element.
In particular, $I$ is simple if and only if $\Cu_1(I)$ is simple.
\end{thm}

\begin{thm}
Let $A$ be a $\CatCa$-algebra of stable rank one and let $I\in\Lat(A)$. Consider the canonical short exact sequence: $0 \longrightarrow I\overset{i}\longrightarrow A\overset{\pi}\longrightarrow A/I\longrightarrow 0 $. Then:

(i) $\Cu_1(\pi)$ induces a $\Cu^\sim$-isomorphism $\Cu_1(A)/\Cu_1(I)\simeq \Cu_1(A/I)$.

(ii) The following sequence is short exact in $\Cu^\sim$:
\[
\xymatrix{
0\ar[r]^{} & \Cu_1(I)\ar[r]^{i^*} & \Cu_1(A)\ar[r]^{\pi^*} & \Cu_1(A/I)\ar[r]^{} & 0
} 
\]
\end{thm}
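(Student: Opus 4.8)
The plan is to deduce (ii) from (i) together with the first theorem, and to concentrate the real work on (i). Throughout I use that, by the first theorem, $\Cu_1(I)$ is a $\Cu^\sim$-ideal of $\Cu_1(A)$, so that the quotient $\Cu_1(A)/\Cu_1(I)$ is a well-defined object of $\Cu^\sim$. Since $\pi\circ i=0$, functoriality of $\Cu_1$ and the universal property of the $\Cu^\sim$-quotient yield a factorization $\pi^*=\Cu_1(\pi)=\overline{\Cu_1(\pi)}\circ q$, where $q\colon\Cu_1(A)\to\Cu_1(A)/\Cu_1(I)$ is the quotient map and $\overline{\Cu_1(\pi)}\colon\Cu_1(A)/\Cu_1(I)\to\Cu_1(A/I)$ is a $\Cu^\sim$-morphism. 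The content of (i) is then that $\overline{\Cu_1(\pi)}$ is a $\Cu^\sim$-isomorphism, that is, a surjective order-embedding.

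For surjectivity I would lift representatives. An element of $\Cu_1(A/I)$ is represented by a pair $(\bar a,\bar u)$ with $\bar a$ positive over $A/I$ and $\bar u$ a unitary in $\widetilde{\Her(\bar a)}$. First lift $\bar a$ to a positive element $a$ over $A$ with $\pi(a)=\bar a$ (standard positive lifting along $\pi$); then $\pi$ restricts to a surjection $\Her(a)\to\Her(\bar a)$. To lift the unitary I would use $\sr(A)=1$: since matrix algebras and hereditary subalgebras over a stable rank one algebra again have stable rank one, invertibles are dense in $\widetilde{\Her(a)}$, so an arbitrary lift of $\bar u$ can be perturbed to an invertible element and, via polar decomposition, to a unitary $u\in\widetilde{\Her(a)}$ with $\pi(u)$ homotopic to $\bar u$. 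Hence $[(a,u)]\mapsto[(\bar a,\bar u)]$, giving surjectivity of $\pi^*$, and thus of $\overline{\Cu_1(\pi)}$. On maximal elements this recovers the surjectivity $\K_1(A)\twoheadrightarrow\K_1(A/I)$, which holds precisely because invertibles are dense when $\sr(A)=1$.

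The crux is the order-embedding property: I must show that $\Cu_1(\pi)(x)\leq\Cu_1(\pi)(y)$ implies $x\leq y$ in $\Cu_1(A)/\Cu_1(I)$, i.e.\ $x\precsim_{\Cu_1(I)}y$. I would separate the positive and unitary data of $x=[(a,u)]$ and $y=[(b,v)]$. For the positive part, the hypothesis forces $\Cu(\pi)[a]\leq\Cu(\pi)[b]$ in $\Cu(A/I)$, and the classical identification $\Cu(A)/\Cu(I)\simeq\Cu(A/I)$ of \cite{CRS10} and \cite{APT14} gives $[a]\precsim[b]$ modulo $\Cu(I)$; since the positive cone of $\Cu_1$ recovers $\Cu$ (by construction, \cite{C20one}) and the first theorem identifies $\Cu_1(I)_+$ with $\Cu(I)$, this is exactly the required inequality on positive cones. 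The genuinely delicate point is to match the unitary data modulo $I$: after transporting $u$ along a comparison witnessing $[a]\precsim[b]$, the resulting unitary must agree with $v$ up to a class that dies in $\Cu_1(I)$. Here I would again invoke $\sr(A)=1$ to lift the unitary comparison realizing $\Cu_1(\pi)(x)\leq\Cu_1(\pi)(y)$, using that unitaries of $\widetilde{\Her(\bar b)}$ lift through $\pi$ and that the residual $\K_1$-ambiguity is absorbed by $\Cu_1(I)_{max}$. I expect this unitary-matching step to be the main obstacle, since it is where the non-positively-ordered nature of $\Cu^\sim$ and the simultaneous control of Cuntz comparison and of the $\K_1$-data come together.

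Finally, for (ii): by the first theorem $i^*=\Cu_1(i)$ is an order-embedding onto the $\Cu^\sim$-ideal $\Cu_1(I)$ of $\Cu_1(A)$, giving exactness at $\Cu_1(I)$; surjectivity of $\pi^*$ was established above, giving exactness at $\Cu_1(A/I)$; and exactness in the middle, $\ker\pi^*=\Cu_1(I)$, is precisely the injectivity of $\overline{\Cu_1(\pi)}$ proved in (i). Unwinding the definition of a short exact sequence in $\Cu^\sim$, these three facts assemble to the claim, so that (ii) is a formal consequence of (i) together with the first theorem.
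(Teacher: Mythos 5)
Your overall architecture is the same as the paper's: factor $\pi^*=\Cu_1(\pi)$ through the quotient by the universal property (\autoref{prop:factorquotient}), prove surjectivity by lifting a positive element along $\pi$ and then lifting the unitary using stable rank one of $\her a$, and deduce (ii) formally from (i) together with the abstract exactness lemmas (\autoref{prop:blabla} and \autoref{lma:bloblo}). Your surjectivity argument and your reduction of (ii) to (i) are essentially identical to the paper's and are fine.

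The genuine gap is exactly where you flag it: the forward implication of the order-embedding property. Saying that the "residual $\K_1$-ambiguity is absorbed by $\Cu_1(I)_{max}$" is a restatement of what must be proved, not a proof, and the lifting heuristic you propose does not by itself produce the required element of $\Cu_1(I)$. The paper settles this by a concrete diagram chase in the ideal-indexed picture $\Cu_1(A)=\bigsqcup_{J\in\Lat(A)}\Cu_f(J)\times\K_1(J)$ (\autoref{prg:newpicture}), and two ideas there are missing from your sketch. First, after the classical identification $\Cu(A)/\Cu(I)\simeq\Cu(A/I)$ gives $x\leq y+z$ for \emph{some} $z\in\Cu(I)$, one must normalize the witness: the paper takes $z:=\infty_I$ and sets $y':=y+z$, which forces $I\subseteq I_{y'}$ and $I_{y'}/I= I_{\overline y}$, so that K-theory of the extension $0\to I\to I_{y'}\to I_{\overline y}\to 0$, together with stable rank one (surjectivity of $\K_1$ under quotients), yields exactness of $\K_1(I)\to\K_1(I_{y'})\to\K_1(I_{\overline y})\to 0$. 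For an arbitrary witness $z$ this fails: the kernel of $\K_1(I_{y+z})\to\K_1(I_{\overline y})$ need not be the image of $\K_1(I_z)$, so the absorption you hope for is simply not available. Second, one needs the naturality square $\pi^*_{I_{y'}}\circ\delta_{I_xI_{y'}}=\delta_{I_{\overline x}I_{\overline y}}\circ\pi^*_{I_x}$ from \autoref{cor:commutsquareidealA}. Given both, the hypothesis $\delta_{I_{\overline x}I_{\overline y}}(\overline k)=\overline l$ shows that $\delta_{I_xI_{y'}}(k)$ and $\delta_{I_yI_{y'}}(l)$ have the same image in $\K_1(I_{\overline y})$, hence differ by $\delta_{II_{y'}}(l')$ for some $l'\in\K_1(I)$; that is, $(x,k)\leq(y,l)+(z,l')$ with $(z,l')\in\Cu_1(I)$, which is the inequality $\leq_{\Cu_1(I)}$ you need. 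This chase is the non-formal core of the theorem, and it is the step your proposal leaves open.
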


\begin{thm}
Let $S$ be a positively directed $\Cu^\sim$-semigroup that has maximal elements. Then the following sequence in $\Cu^\sim$ is split-exact: 
\[
\xymatrix{
0\ar[r]^{} & S_+\ar[r]^{i} & S\ar[r]^{j} & S_{max}\ar@{.>}@/_{-1pc}/[l]^{q}\ar[r]^{} & 0
} 
\]
where $i$ is the canonical injection, $j(s):=s+e_{S_{max}}$ and $q(s):=s$.
\end{thm}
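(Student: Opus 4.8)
The plan is to realize the displayed sequence as the canonical ideal--quotient sequence attached to the $\Cu^\sim$-ideal $S_+$, together with the explicit section $q$. Throughout I would use the structural facts recalled from \cite{C20one}: that $S_{max}$ is an absorbing abelian group of maximal elements whose neutral element $e_{S_{max}}$ is idempotent ($e_{S_{max}}+e_{S_{max}}=e_{S_{max}}$) and positive, and that absorption yields $x+e_{S_{max}}=e_{S_{max}}$ for every $x\in S_+$ (indeed $x\geq 0$ forces $x+e_{S_{max}}\geq e_{S_{max}}$, and maximality of $e_{S_{max}}$ upgrades this to equality). An immediate consequence I would record first is that $S_{max}$ is discretely ordered: if $m_1\leq m_2$ with both maximal then $m_1=m_2$.

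First I would check that the three maps are $\Cu^\sim$-morphisms. The injection $i$ is the inclusion of the positive cone, and $q$ is the inclusion of $S_{max}$; since $S_{max}$ carries the discrete order, the order-embedding, suprema, and way-below conditions for $q$ are immediate, and additivity is the group law. For $j$, additivity follows from idempotency via $j(s)+j(t)=s+t+e_{S_{max}}+e_{S_{max}}=s+t+e_{S_{max}}=j(s+t)$, while monotonicity and preservation of suprema hold because addition is compatible with suprema of increasing sequences and $e_{S_{max}}$ is a fixed summand; preservation of the way-below relation is then deduced by factoring $j$ through the quotient map, as established in the next step.

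The core of the argument is the identification of the quotient. I would first verify that $S_+$ is the $\Cu^\sim$-ideal of $S$ corresponding, under the lattice isomorphism established earlier, to the top $\Cu$-ideal $S_+$ of its own positive cone; this legitimizes $i$ as an ideal inclusion and makes $S/S_+$ available. I then define $\bar\jmath\colon S/S_+\to S_{max}$ by $[s]\mapsto s+e_{S_{max}}$ and claim it is a $\Cu^\sim$-isomorphism with inverse $m\mapsto[m]$. Well-definedness and the fact that $\bar\jmath$ is an order-embedding in both directions rest on absorption: if $s\leq t+x$ with $x\in S_+$ then $s+e_{S_{max}}\leq t+x+e_{S_{max}}=t+e_{S_{max}}$, and conversely $s+e_{S_{max}}\leq t+e_{S_{max}}$ together with $s\leq s+e_{S_{max}}$ (as $e_{S_{max}}\geq 0$) gives $s\leq t+e_{S_{max}}$ with $e_{S_{max}}\in S_+$, that is $[s]\leq[t]$. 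Surjectivity is immediate since $\bar\jmath([m])=m+e_{S_{max}}=m$ for $m\in S_{max}$, additivity again uses idempotency, and preservation of suprema follows from $\bar\jmath([\sup s_n])=(\sup s_n)+e_{S_{max}}=\sup(s_n+e_{S_{max}})$. Thus $\bar\jmath$ is a $\Cu^\sim$-isomorphism; since $j=\bar\jmath\circ\pi$ with $\pi\colon S\to S/S_+$ the quotient map, this simultaneously proves exactness at $S$ and that $j$ preserves $\ll$, and surjectivity of $j$ (exactness at $S_{max}$) follows from that of $\pi$ and $\bar\jmath$. The splitting is then recorded at once: $j\circ q(m)=m+e_{S_{max}}=m$, so $j\circ q=\id_{S_{max}}$, giving split-exactness.

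The main obstacle I anticipate is the identification $S/S_+\cong S_{max}$. Because the order on $S$ is not positive, one must be careful that the congruence class of $0$ modulo $S_+$ is correctly governed by the ideal $S_+$; this is where axioms (PD) and (PC) enter, the former guaranteeing that every element sits below a positive one, so that the only genuine constraint is a lower bound of the form $0\leq s+y$ with $y\in S_+$. The other delicate point is bicontinuity of $\bar\jmath$, that is, checking it is a $\Cu^\sim$-isomorphism rather than merely an isomorphism of ordered monoids; I reduce this to the compatibility of addition by $e_{S_{max}}$ with suprema, so that $\bar\jmath$ and its inverse both preserve the way-below relation.
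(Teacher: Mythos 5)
Your organizing idea has a genuine gap: you build the whole proof on treating $S_+$ as a $\Cu^\sim$-ideal of $S$ and forming the quotient $S/S_+$, but $S_+$ is \emph{not} an ideal of $S$ in the sense of \autoref{dfn:orderideal} (unless $S=S_+$). Ideals must be Scott-closed (in particular, lower sets) and positively stable, and $S_+$ fails both: it is an upper set rather than a lower set, and positive stability together with axiom (PD) would force $S_+=S$, since for any $x\in S$ there is $p_x$ with $x+p_x\geq 0$, so that $(x+P_x)\cap S_+\neq\emptyset$. For the same reason the verification you propose fails: under the lattice isomorphism of \autoref{thm:LatCu}, the top ideal of $\Lat(S_+)$ corresponds to $I_{e_{S_{max}}}$, which equals $S$ itself, not $S_+$. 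The paper states exactly this in the remark immediately following the theorem: \autoref{lma:bloblo} cannot be used here because $S_+$ is not a $\Cu^\sim$-ideal, and the smallest ideal containing $S_+$ is $S$. Consequently the quotient machinery (\autoref{lma:quotientideal}, \autoref{prop:factorquotient}, \autoref{lma:bloblo}) is not available --- note also that it is developed only under the additional axiom (PC), which this theorem does not assume --- so $S/S_+$, the map $\bar\jmath$, and the factorization $j=\bar\jmath\circ\pi$ do not exist as invoked; in particular your argument that $j$ preserves $\ll$, which you defer to that factorization, is unsupported as written.

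The computational content of your proposal is, however, sound, and it is essentially the paper's actual (direct) proof once the quotient scaffolding is removed. The paper argues: $i$ is trivially an order-embedding $\Cu^\sim$-morphism, giving exactness at $S_+$ by \autoref{prop:blabla}; $j$ is additive because $2e_{S_{max}}=e_{S_{max}}$, and is \emph{constant on comparable pairs} ($s\leq s'$ implies $j(s)=j(s')$ by maximality), which at once yields monotonicity, preservation of suprema and of $\ll$ (as $\ll$ is equality in $S_{max}$), and surjectivity, hence exactness at $S_{max}$ by \autoref{prop:blabla}; exactness at $S$ is then checked directly against the pair definitions of $\ker$ and $\im$: if $j(s_1)=j(s_2)$ then $s_1\leq s_1+e_{S_{max}}=s_2+e_{S_{max}}$ with $e_{S_{max}}\in S_+$, so $\ker j\subseteq \im i$; conversely $s_1\leq s+s_2$ with $s\geq 0$ gives $s_1\leq e_{S_{max}}+s_2$ (every positive element lies below $e_{S_{max}}$), whence $j(s_1)=j(s_2)$. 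Your absorption identities are precisely these steps, so the repair is to run them directly rather than through $S/S_+$. One further caution: your claim that the way-below condition for $q$ is ``immediate'' is not right --- every $m\in S_{max}$ is compact \emph{within} $S_{max}$, but maximal elements need not be compact in $S$ (already $\infty_A$ is non-compact in $\Cu(A)$ for typical simple $A$); this verification is a subtle point that the paper's own proof also leaves untreated.
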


The paper is organized as follows: In the first part, we define an abstract notion of a $\Cu^\sim$-ideal for any positively directed $\Cu^\sim$-semigroup. We then see that the smallest ideal containing an element might not always exist since the intersection of two $\Cu^\sim$-ideals is not necessarily a $\Cu^\sim$-ideal. However, the smallest ideal  containing an element $s$ of a positively directed and positively convex $\Cu^\sim$-semigroup $S$, where the notion of \emph{positively convex} is to be specified, always exists and is explicitly computed. We finally build a complete lattice structure on the set of $\Cu^\sim$-ideals of a positively directed and positively convex $\Cu^\sim$-semigroup $S$, relying on the natural set bijection between $\Lat(S)\simeq \Lat(S_+)$, where $S_+\in\Cu$ is the positive cone of $S$.

We also study the notion of quotients and exactness in the category $\Cu^\sim$. Among others, we show that a quotient  of a positively directed and positively ordered $\Cu^\sim$-semigroup by an ideal is again a positively directed and positively ordered $\Cu^\sim$-semigroup. Moreover, the functor $\Cu_1$ preserves quotients and short exact sequence of ideals. We finally use the split-exact sequence $0\longrightarrow S_+\longrightarrow S\longrightarrow S_{max}\longrightarrow 0$ described above to unravel commutative diagrams with exact rows linking $\Cu,\K_1$ and $\Cu_1$ of a separable $\CatCa$-algebra with stable rank one -and its ideals-.

Note that this paper is the second part of a twofold work (following up \cite{C20one}) and completes the properties of the unitary Cuntz semigroup established during the author's PhD thesis. We also mention that the unitary Cuntz semigroup -through these results- will be used in a forthcoming paper to distinguish two non-simple unital separable $\CatCa$-algebras with stable rank one, which originally agree on $\K$-Theory and the Cuntz semigroup; see \cite{C21}.

\textbf{Acknowledgments} The author would like to thank Ramon Antoine for suggesting a more adequate version of the \textquoteleft positively convex\textquoteright\ property, and both Ramon Antoine and Francesc Perera for insightful comments about the paper. The author also thanks the referee for his/her pertinent comments that have helped to reformulate some part of the manuscript in a better way.

\section{Preliminaries}

We use $\CatoM$ to denote the category of ordered monoids, in contrast to the category of positively ordered monoids, that we write $\CatPoM$. We also use $\CatCa_{\sr1}$ to denote the full subcategory of $\CatCa$-algebras of stable rank one.

\subsection{The Cuntz semigroup}
We recall some definitions and properties on the Cuntz semigroup of a $\CatCa$-algebra. More details can be found in \cite{APT14}, \cite{APT09}, \cite{CEI08}, \cite{T19}.

\begin{prg}\textbf{(The Cuntz semigroup of a $\CatCa$-algebra.)}
\label{dfn:Cusg}
Let $A$ be a $\CatCa$-algebra. We denote by $A_+$ the set of positive elements. Let $a$ and $b$ be in $A_+$. We say that $a$ is Cuntz subequivalent to $b$, and we write $a\lesssim_{\Cu} b$, if there exists a sequence $(x_n)_{n\in\N}$ in $A$ such that $a=\lim\limits_{n\in\N}x_nbx_n^*$. After antisymmetrizing this relation, we get an equivalence relation over $A_+$, called Cuntz equivalence, denoted by $\sim_{\Cu}$.

Let us write $\Cu(A):= (A\otimes\mathcal{K})_+/\!\!\sim_{\Cu}$, that is, the set of Cuntz equivalence classes of positive elements of $A\otimes\mathcal{K}$. Given $a\in (A\otimes\mathcal{K})_+ $, we write $[a]$ for the Cuntz class of $a$. The set $\Cu(A)$ is equipped with an addition as follows: let $v_1$ and $v_2$ be two isometries in the multiplier algebra of $A\otimes\mathcal{K}$, such that $v_1v_1^*+v_2v_2^*=1_{M(A\otimes\mathcal{K})}$. Consider the $^*$-isomorphism $\psi: M_2(A\otimes\mathcal{K})\longrightarrow A\otimes\mathcal{K}$ given by $\psi(\begin{smallmatrix} a & 0\\ 0 & b \end{smallmatrix})=v_1av_1^*+v_2bv_2^*$, and we write $a \oplus b:=\psi(\begin{smallmatrix} a & 0\\ 0 & b \end{smallmatrix})$. 
For any $[a],[b]$ in $\Cu(A)$, we define $[a]+[b]:=[a\oplus b]$ and $[a]\leq [b]$ whenever $a\lesssim_{\Cu} b$. In this way $\Cu(A)$ is a partially ordered semigroup called \emph{the Cuntz semigroup of $A$}. 

For any $^*$-homomorphism $\phi:A\longrightarrow B$, one can define $\Cu(\phi):\Cu(A)\longrightarrow\Cu(B)$, a semigroup map, by $[a]\longmapsto [(\phi\otimes id_\mathcal{K})(a)]$. Hence, we get a functor from the category of $\CatCa$-algebras into a certain subcategory of $\CatPoM$, called the category $\Cu$, that we describe next. 
\end{prg}

\begin{prg}\textbf{(The category $\Cu$)}.
\label{dfn:llCU}
Let $(S,\leq)$ be a positively ordered semigroup and let $x,y$ in $S$. We say that $x$ is \emph{way-below} $y$ and we write $x\ll y$ if, for all increasing sequences $(z_n)_{n\in\N}$ in $S$ that have a supremum, if $\sup\limits_{n\in\N} z_n\geq y$, then there exists $k$ such that $z_k\geq x$. This is an auxiliary relation on $S$ called the \emph{way-below relation} or the \emph{compact-containment relation}. In particular $x\ll y$ implies $x\leq y$ and we say that $x$ is a \emph{compact element} whenever $x\ll x$. 

We say that $S$ is an abstract Cuntz semigroup, or a $\Cu$-semigroup, if it satisfies the following order-theoretic axioms: 

$\,\,$(O1): Every increasing sequence of elements in $S$ has a supremum. 

$\,\,$(O2): For any $x\in S$, there exists a $\ll$-increasing sequence $(x_n)_{n\in\N}$ in $S$ such that $\sup\limits_{n\in\N} x_n= x$.

$\,\,$(O3): Addition and the compact containment relation are compatible.

$\,\,$(O4): Addition and suprema of increasing sequences are compatible.

A \emph{$\Cu$-morphism} between two $\Cu$-semigroups $S,T$ is a positively ordered monoid morphism that preserves the compact containment relation and suprema of increasing sequences. 

The category of abstract Cuntz semigroups, written $\Cu$, is the subcategory of $\CatPoM$ whose objects are $\Cu$-semigroups and morphisms are $\Cu$-morphisms. 
\end{prg}

\begin{prg}\textbf{(Countably-based $\Cu$-semigroups.)}
\label{dfn:Cuorderunit}
Let $S$ be a $\Cu$-semigroup. We say that $S$ is \emph{countably-based} if there exists a countable subset $B\subseteq S$ such that for any $a,a'\in S$ such that $a'\ll a$, then there exists $b\in B$ such that $a'\leq b \ll a$. The set $B$ is often referred to as a \emph{basis}.
An element $u\in S$ is called an \emph{order-unit} of $S$ if for any $x\in S$, there exists $n\in\overline{\N}$ such that $x\leq nu$, where $\overline{\N}:=\N\sqcup\{\infty\}$.

Let $S$ be a countably-based $\Cu$-semigroup. Then $S$ has a maximal element, or equivalently, it is singly-generated. Let us also mention that if $A$ is a separable $\CatCa$-algebra, then $\Cu(A)$ is countably-based. In fact, its largest element, that we write $\infty_A$, can be explicitly constructed as follows: Let $s_A$ be any strictly positive element (or full positive) in $A$. Then $\infty_{A}=\sup\limits_{n\in\N}n[s_A]$. A fortiori, $[s_A]$ is an order-unit of $\Cu(A)$.
\end{prg}

\begin{prg}\textbf{(Lattice of ideals in $\Cu$.)}
\label{prg:latticecu}
Let $S$ be a $\Cu$-semigroup. An \emph{ideal} of $S$ is a submonoid $I$ that is closed under suprema of increasing sequences and such that for any $x,y$ such that $x\leq y$ and $y\in I$, then $x\in I$. 

It is shown in \cite[\S 5.1.6]{APT14}, that for any $I,J$ ideals of $S$, $I\cap J$ is again an ideal. Therefore for any $x\in S$, the ideal generated by $x$, defined as the smallest ideal of $S$ containing $x$, and written $I_x$, is exactly the intersection of all ideals of $S$ containing $x$. An explicit computation gives us $I_x:=\{y\in S \mid y\leq \infty x\}$. 

Moreover it is shown that $I+J:=\{z\in S\mid z\leq x+y, x\in I, y\in J\}$ is also an ideal. Thus we write $\Lat(S):=\{$ideals of $S\}$, which is a complete lattice under the following operations: for any two  $I,J\in \Lat(S)$, we define $I\wedge J:= I\cap J$ and $I\vee J:=I+J$. 

Furthermore, for any $\CatCa$-algebra $A$, we have that $\Cu(I)$ is an ideal of $\Cu(A)$ for any $I\in\Lat(A)$. In fact, we have a lattice isomorphism as follows:
\[
	\begin{array}{ll}
		\Lat(A)\overset{\simeq}{\longrightarrow} \Lat(\Cu(A))\\
		\hspace{0,85cm} I\longmapsto \Cu(I)
	\end{array}
\]
Finally, whenever $S$ is countably-based, any ideal $I$ of $S$ is singly-generated, for instance by its largest element, that we also write $\infty_I$. In particular, for any $\CatCa$-algebra $A$, any $a,b\in (A\otimes\mathcal{K})_+$, if $[a]\leq [b]$ in $\Cu(A)$, then $I_a\subseteq I_b$, or equivalently $I_{[a]}\subseteq I_{[b]}$. (The converse is a priori not true: $I_{x}=I_{kx}$ for any $x\in\Cu(A)$, any $k\in\overline{\N}$ but in general $x\neq kx$.)
\end{prg}

\begin{prg}\textbf{(Quotients in $\Cu$.)}
\label{prg:quotientcu}
Let $S$ be a $\Cu$-semigroup and $I\in\Lat(S)$. Let $x,y\in S$. We write $x\leq_I y$ if: there exists $z\in I$ such that $x\leq z+y$. By antisymmetrizing $\leq_I$, we obtain an equivalence relation $\sim_I$ on $S$. Define $S/I:= S/\!\!\sim_I$. For $x\in S$, write $\overline{x}:=[x]_{\sim_I}$ and equip $S/I$ with the following addition and order: Let $x,y\in S$. Then $\overline{x}+\overline{y}:=\overline{x+y}$ and $\overline{x}\leq\overline{y}$, if $x\leq_I y$. These are well-defined and $(S/I,+,\leq)$ is a $\Cu$-semigroup, often referred to as the \emph{quotient of $S$ by $I$}. Moreover, the canonical quotient map $S\longrightarrow S/I$ is a surjective $\Cu$-morphism. Finally, for any $\CatCa$-algebra $A$ and any $I\in\Lat(A)$, we have $\Cu(A/I)\simeq \Cu(A)/\Cu(I)$; see \cite[Corollary 2]{CRS10}.
\end{prg}

\subsection{The unitary Cuntz semigroup}
We recall some definitions and properties on the $\Cu_1$-semigroup of a $\CatCa$-algebra with stable rank one. More details can be found in \cite{C20one}.

\begin{prg}\textbf{(The unitary Cuntz semigroup of a $\CatCa$-algebra - The category $\Cu^\sim$.)}
\label{dfn:Cu1sg}
Let $A$ be a $\CatCa$-algebra of stable rank one, let $a,b\in A_+$ such that $a\lesssim_{\Cu} b$. Using the stable rank one hypothesis, there exist \emph{standard morphisms} $\theta_{ab}:\her (a)^\sim\hooklongrightarrow \her (b)^\sim$ such that $[\theta_{ab}(u)]_{\K_1}$ does not depend on the standard morphism chosen, for any unitary element $u\in \her (a)^\sim$. That is, there is a canonical way (up to homotopy equivalence) to extend unitary elements of $\her (a)^\sim$ into unitary elements of $\her (b)^\sim$ . Now, let $u,v$ be unitary elements of $\her (a)^\sim,\her (b)^\sim$ respectively. We say that $(a,u)$ is \emph{unitarily Cuntz subequivalent} to $(b,v)$, and we write $(a,u)\lesssim_1 (b,v)$, if $a\lesssim_{\Cu} b$ and $\theta_{ab}(u)\sim_h v$. After antisymmetrizing this relation, we get an equivalence relation on $H(A):=\{(a,u)\mid a\in (A\otimes\mathcal{K})_+ , u\in \mathcal{U}(\her (a)^\sim)\}$, called \emph{the unitary Cuntz equivalence}, denoted by $\sim_1$.

Let us write $\Cu_1(A):= H(A)/\!\!\sim_{1}$. The set $\Cu_1(A)$ can be equipped with a natural order given by $[(a,u)]\leq [(b,v)]$ whenever $(a,u)\lesssim_{1} (b,v)$, and we set $[(a,u)]+[(b,v)]:=[(a\oplus b,u\oplus v)]$. In this way $\Cu_1(A)$ is a semigroup called \emph{the unitary Cuntz semigroup of $A$}. 

Any $^*$-homomorphism $\phi:A\longrightarrow B$ naturally induces a semigroup morphism $\Cu_1(\phi):\Cu_1(A)\longrightarrow\Cu_1(B)$, by sending $[(a,u)]\longmapsto [(\phi\otimes id_\mathcal{K})(a),(\phi\otimes id_\mathcal{K})^\sim(u)]$. Hence, we get a functor from the category of $\CatCa$-algebras of stable rank one into a certain subcategory of ordered monoids, denoted by $\CatoM$, called the category $\Cu^\sim$, that we describe in the sequel. 

Let $(S,\leq)$ be an ordered monoid. Recall the compact-containment relation defined in \autoref{dfn:llCU}. We say that $S$ is a $\Cu^\sim$-semigroup if $S$ satisfies axioms (O1)-(O4) and $0\ll 0$. We emphasize that we do not require the monoid to be positively ordered. A \emph{$\Cu^\sim$-morphism} between two $\Cu^\sim$-semigroups $S,T$ is an ordered monoid morphism that preserves the compact-containment relation and suprema of increasing sequences. 

The category of abstract unitary Cuntz semigroups, written $\Cu^\sim$, is the subcategory of $\CatoM$ whose objects are $\Cu^\sim$-semigroups and morphisms are $\Cu^\sim$-morphisms. Actually, as shown in \cite[Corollary 3.21]{C20one}, the functor $\Cu_1$ from the category $\CatCa_{\sr1}$ to the category $\Cu^\sim$ is arbitrarily continuous.
\end{prg}

\begin{prg}\textbf{(Alternative picture of the $\Cu_1$-semigroup.)}
\label{prg:newpicture}
We will sometimes use an alternative picture described in \cite[\S 4.1]{C20one}.
First, recall that for a $\CatCa$-algebra $A$, $\Lat_f(A)$ is the sublattice of $\Lat(A)$ consisting of ideals that contain a full, positive element. Also recall that $\{\sigma\text{-unital ideals of }A\}\subseteq \Lat_f(A)$ and if moreover $A$ is separable, then the converse inclusion holds. Finally, for any $I\in\Lat_f(A)$, we define $\Cu_f(I):=\{x\in \Cu(A) \mid I_x=\Cu(I)\}$ to be the set of full elements in $\Cu(I)$. 

Let $A$ be a $\CatCa$-algebra of stable rank one such that $\Lat_f(A)=\{\sigma\text{-unital ideals of }A\}$.
Then $\Cu_1(A)$ can be pictured as \vspace{-0,3cm}\[\bigsqcup\limits_{I\in\Lat_f(A)} \Cu_f(I)\times \K_1(I)\] that we also write $\Cu_1(A)$. The addition and order are defined as follows: For any $(x,k),(y,l)\in \Cu_1(A)$
\[
\left\{
\begin{array}{ll}
(x,k)\leq (y,l) \text{ if: } x\leq y \text{ and } \delta_{I_xI_{y}}(k)=l.\\
(x,k)+(y,l)=(x+y,\delta_{I_xI_{x+y}}(k)+\delta_{I_yI_{x+y}}(l)).
\end{array}
\right.
\]
where $\delta_{IJ}:=\K_1(I\overset{i}\hooklongrightarrow J)$, for any $I,J\in\Lat_f(A)$ such that $I\subseteq J$.

Let $A,B$ be $\CatCa$-algebras of stable rank one and let $\phi:A\longrightarrow B$ be a $^*$-homomorphism. For any $I\in\Lat_f(A)$, we write $J:=\overline{B\phi(I)B}$, the smallest ideal of $B$ that contains $\phi(I)$. Then $J\in\Lat_f(B)$ and $\Cu_1(\phi)$ can be rewritten as $(\Cu(\phi),\{\K_1(\phi_{|I})\}_{I\in\Lat_f(A)})$, where $\phi_{|I}:I\longrightarrow J$. Observe that we might write $\alpha,\alpha_0, \alpha_I$ to denote $\Cu_1(\phi),\Cu(\phi),\K_1(\phi_{|I})$ respectively.
\end{prg}

\section{Ideal structure in the category \texorpdfstring{$\Cu^\sim$}{Cu-tilde}}
\label{sec:idealstructure}

In this section we define and study the notion of ideals in the category $\Cu^\sim$. Since the underlying monoid of a $\Cu^\sim$-semigroup might not be positively ordered, definitions and results of the category $\Cu$ cannot be applied and some extra work is needed. When it comes to a concrete $\Cu^\sim$-semigroup, -that is, coming from a $\CatCa$-algebra of stable rank one $A$- we wish that a $\Cu^\sim$-ideal satisfies natural properties, e.g. $\Cu_1(I)$ is an ideal of $\Cu_1(A)$ or $\Lat(A)$ is entirely captured by the set of $\Cu^\sim$-ideals of $\Cu_1(A)$. For that matter, we first have to study the set of maximal elements of a $\Cu^\sim$-semigroup. We show that under additional axioms -satisfied by any $\Cu_1(A)$-, namely the axioms (PD) and (PC), the set maximal elements of a $\Cu^\sim$-semigroup forms, when not empty, an absorbing abelian group. From there, we are able to define a suitable notion of $\Cu^\sim$-ideal. We will also use concepts from Domain Theory that we recall now (see \cite{GHKLMS03}).

Finally, we say that a $\Cu^\sim$-semigroup $S$ is \emph{countably-based} if there exists a countable subset $B\subseteq S$ such that for any pair $a'\ll a$, there exists $b\in B$ such that $a'\leq b \ll a$.

\subsection{Definition of a \texorpdfstring{$\Cu^\sim$}{Cu-tilde} ideal}
\begin{dfn}\label{dfn:scottopen} \cite[Definition II.1.3]{GHKLMS03}
Let $S$ be a $\Cu^\sim$-semigroup. A subset $O\subseteq S$ is \emph{Scott-open} if:

(i) $O$ is an upper set, that is, for any $y\in S$, $y\geq x\in O$ implies $y\in O$.

(ii) For any $x\in O$, there exists $x'\ll x$ such that $x'\in O$. Equivalently, for any increasing sequence of $S$whose supremum belongs to $O$, there exists an element of the sequence also in $O$.

Dually we say that $F\subseteq S$ is \emph{Scott-closed} if $S\setminus F$ is Scott-open, that is, if it is a lower set that is closed under suprema of increasing sequences.
\end{dfn}

Let us check the equivalence of (ii) in the above definition: Let $O$ be an upper set of $S$ and let $x\in O$. Suppose there exists $x'\ll x$ such that $x'\in O$. Let $(x_n)_n$ be any increasing sequence whose supremum is $x$. By definition of $\ll$, there exists $x_n\geq x'$, hence $x_n$ is also in $O$. 
Conversely, using (O2), there exists a $\ll$-increasing sequence $(x_n)_n$ whose supremum is $x$. By hypothesis, there exists $n$ such that $x_n\in O$, and by construction $x_n\ll x$. This finishes the proof.

\begin{dfn}
Let $S$ be a $\Cu^\sim$-semigroup. We define the following axioms: 

(PD): We say that $S$ is \emph{positively directed} if, for any $x\in S$, there exists $p_x\in S$ such that $x+p_x\geq 0$.

(PC): We say that $S$ is \emph{positively convex} if, for any $x,y\in S$ such that $y\geq 0$ and $x\leq y$, we have $x+y\geq 0$.
\end{dfn}

The axiom (PC) ensures that the only negative element of $S$ is $0$, while the axiom (PD) ensures that any non-positive element has a \textquoteleft symmetric\textquoteright\, \!such that their sum is a positive element. Furthermore, the set of maximal elements of a positively directed $\Cu^\sim$-semigroup has an abelian group structure (see \cite[\S 5.1]{C20one}). We first show that these axioms are satisfied by any concrete $\Cu^\sim$-semigroup.

\begin{lma}
Let $A$ be a $\CatCa$-algebra of stable rank one. Then $\Cu_1(A)$ is positively directed and positively convex.
\end{lma}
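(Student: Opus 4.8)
The plan is to carry out the whole verification inside the alternative description of $\Cu_1(A)$ from \autoref{prg:newpicture}, namely $\Cu_1(A)=\bigsqcup_{I\in\Lat(A)}\Cu_f(I)\times\K_1(I)$, writing a general element as a pair $(x,k)$ with $x\in\Cu(A)$ and $k$ in the abelian group $\K_1(I_x)$ attached to the ideal it generates. The only conceptual step is to pin down the positive cone. The neutral element is $(0,0)$, sitting over the zero ideal, and for any $(x,k)$ the relation $(0,0)\leq(x,k)$ unfolds, via the order recalled in \autoref{prg:newpicture}, to $0\leq x$ (which always holds in $\Cu(A)$) together with $\delta_{I_0I_x}(0)=k$. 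Since $\delta_{I_0I_x}$ is a group homomorphism it sends $0$ to $0$, yielding the clean characterization
\[
(x,k)\geq 0\iff k=0,
\]
so that the positive cone is exactly $\bigsqcup_{I}\Cu_f(I)\times\{0\}\cong\Cu(A)$.

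With this in hand, (PD) is immediate from the group structure of $\K_1(I_x)$. Given $(x,k)$, I would take $p_{(x,k)}:=(x,-k)$, which is a legitimate element of $\Cu_f(I_x)\times\K_1(I_x)\subseteq\Cu_1(A)$ because $-k$ exists in the group $\K_1(I_x)$ and $x$ is full in the ideal it generates. Using $I_{2x}=I_x$ (recorded in \autoref{prg:latticecu} as $I_x=I_{n.x}$), the map $\delta_{I_xI_{2x}}$ is the identity, so the addition formula gives $(x,k)+(x,-k)=(2x,k-k)=(2x,0)$, which is $\geq 0$ by the characterization above.

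For (PC), suppose $(x,k)\leq(y,l)$ with $(y,l)\geq 0$; by the characterization $l=0$, and the order relation then yields $x\leq y$ in $\Cu(A)$ together with $\delta_{I_xI_y}(k)=0$. I would compute $(x,k)+(y,0)=(x+y,\delta_{I_xI_{x+y}}(k))$, the $y$-slot contributing $\delta_{I_yI_{x+y}}(0)=0$. The key point is that $x\leq y$ forces $I_x\subseteq I_y$ and hence $I_{x+y}=I_x\vee I_y=I_y$ in $\Lat(\Cu(A))$, so $\delta_{I_xI_{x+y}}=\delta_{I_xI_y}$ and the second coordinate equals $\delta_{I_xI_y}(k)=0$. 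Thus $(x,k)+(y,0)=(x+y,0)\geq 0$, proving (PC).

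I do not expect a serious obstacle here: the argument is essentially formal once the positive cone is identified, the remaining work being the bookkeeping of the $\delta$-maps. The genuinely load-bearing ingredients are the elementary ideal computations $I_{n.x}=I_x$ and $I_{x+y}=I_x\vee I_y$ (so that $I_{x+y}=I_y$ when $x\leq y$), both following from the lattice structure of $\Cu$-ideals recalled in \autoref{prg:latticecu}, together with the fact that each $\K_1(I_x)$ is an abelian group supplying the inverse used in (PD).
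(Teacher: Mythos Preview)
Your proof is correct and follows essentially the same approach as the paper: work in the alternative picture, use $(x,-k)$ as the witness for (PD), and for (PC) use $\delta_{I_xI_y}(k)=0$ together with $I_{x+y}=I_y$ to conclude $(x,k)+(y,0)=(x+y,0)$. Your write-up is in fact more explicit than the paper's, spelling out the characterization of the positive cone and the reason $I_{x+y}=I_y$, which the paper leaves implicit.
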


\begin{proof}
Let $A$ be a $\CatCa$-algebra of stable rank one and consider $[(a,u)]\in\Cu_1(A)$, where $a\in(A\otimes\mathcal{K})_+$ and $u\in \mathcal{U}(\her(a)^\sim)$. Observe that $[(a,u)]+[(a,u^*)]=[(a\oplus a,1)]\geq 0$, and so $\Cu_1(A)$ is positively directed. Now let $[(b,1)]$ be a positive element in $\Cu_1(A)$ such that $[(a,u)]\leq [(b,1)]$. Since $[(a,u)]\leq [(b,1)] $, we know that $\chi_{ab}([u])=[1]$. Therefore, $\chi_{a(a\oplus b)}([u])=[1]$, and we deduce that $[(a,u)]+[(b,1)] = [(a\oplus b,1)]$ is a positive element in $\Cu_1(A)$, which finishes the proof.
\end{proof}

\begin{dfn}
Let $S$ be a $\Cu^\sim$-semigroup. We define
$S_{max}:=\{x\in S \mid \text{ if } y\geq x, \text{ then } y=x\}$, the set of maximal elements of $S$.
\end{dfn}

\begin{prop}\cite[Proposition 5.4]{C20one}
\label{prop:PCABGP}
Let $S$ be a positively directed $\Cu^\sim$-semigroup. Then $S_{max}$ is either empty or an absorbing abelian group in $S$ whose neutral element $e_{S_{\max}}$ is positive. 
\end{prop}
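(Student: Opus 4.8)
The plan is to single out a canonical largest positive element $e$ of $S$, to prove that $S_{max}$ consists \emph{exactly} of those elements that absorb $e$ under addition, and then to read off the group structure algebraically. The only genuinely analytic input will be the existence of $e$; the use of positive directedness (PD) will enter solely to manufacture inverses.

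First I would produce the neutral element. The positive cone $S_+=\{x\in S:x\geq 0\}$ is an upper set closed under the monoid operation and under suprema of increasing sequences, and it is a countably-based $\Cu$-semigroup (with a countable basis inherited from that of $S$). By the fact recalled in \autoref{dfn:Cuorderunit} that any countably-based $\Cu$-semigroup has a largest element, $S_+$ has a largest element $e$, necessarily positive. Two observations make $e$ the right candidate: since $e+e\geq 0$ and $e+e\geq e$, maximality of $e$ in $S_+$ forces $e+e=e$, so $e$ is idempotent; and if $y\geq e$ then $y\geq 0$, hence $y\in S_+$ and $y\leq e$, so $y=e$, i.e. $e\in S_{max}$. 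This already yields $S_{max}\neq\emptyset$ and a distinguished positive idempotent, which will turn out to be $e_{S_{max}}$.

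Next I would introduce the algebraically defined set $G:=\{m\in S:m+e=m\}$ and verify it is an abelian group with identity $e$. It is a submonoid containing $e$, and for $m\in G$ axiom (PD) provides $p$ with $q:=m+p\geq 0$; then $q\in S_+$ gives $q\leq e$, so $q+e$ lies between $0+e=e$ and $e+e=e$, whence $q+e=e$ and thus $m+(p+e)=e$. As $p+e$ again absorbs $e$, every element of $G$ has an inverse in $G$, so $G$ is a group (abelian because $S$ is commutative). The conceptual core is then the identity $G=S_{max}$. The inclusion $S_{max}\subseteq G$ is immediate, since $m+e\geq m$ forces $m+e=m$ by maximality. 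For the reverse inclusion, take $m\in G$ with inverse $m'$ and suppose $y\geq m$; then $y+m'\geq m+m'=e\geq 0$, so $y+m'\in S_+$ and hence $y+m'=e$. Adding $m$ and using $m\in G$ gives $y+e=m$; since $e\geq 0$ forces $y+e\geq y$ while $y\geq m=y+e$, the two inequalities collapse to $y=m$, so $m$ is maximal.

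Finally, absorption is purely formal: for any $x\in S$ and $m\in S_{max}=G$ one computes $(x+m)+e=x+(m+e)=x+m$, so $x+m\in S_{max}$, i.e. $S+S_{max}\subseteq S_{max}$. I expect the main obstacle to be the very first step—checking cleanly that $S_+$ is a countably-based $\Cu$-semigroup, and in particular that axiom (O2) survives passage to the positive cone, which requires approximating a positive element from below by \emph{positive} way-below elements—and thereby extracting $e$. Once $e$ is available, everything else (notably the nontrivial inclusion $G\subseteq S_{max}$) reduces to short order-theoretic manipulations using only idempotency of $e$, its maximality among positive elements, and (PD); observe that (PC) is not needed here, consistent with its absence from the hypotheses.
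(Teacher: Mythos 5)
There is no internal proof to compare against here: the paper states this proposition as an import from \cite[Proposition 5.4]{C20one} and never proves it, so your argument can only be judged on its own merits. Its algebraic core is correct and complete. Granting a largest positive element $e$, each step of your chain — $e+e=e$ and $e\in S_{max}$; the set $G=\{m\in S: m+e=m\}$ is an abelian group with identity $e$, with inverses $p+e$ manufactured from (PD); both inclusions proving $G=S_{max}$; and absorption via $(x+m)+e=x+m$ — is a valid ordered-monoid computation, and your closing remark that (PC) is never needed is also accurate.

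The one genuine soft spot is the step you yourself flag as the main obstacle and then leave unproved: that $S_+$ is a countably-based $\Cu$-semigroup. This is not a routine inheritance, because the way-below relation of $S_+$ must be computed with sequences lying in $S_+$ and is strictly weaker than the restriction of $\ll_S$: for instance in $S=\R\cup\{+\infty\}$ (a countably-based, positively directed $\Cu^\sim$-semigroup) one has $0\not\ll_S 0$, so $0$ admits no $\ll_S$-increasing sequence of positive elements with supremum $0$, and yet $0\ll_{S_+}0$; any proof of \axiomO{2} for $S_+$ must exploit this weakening rather than simply intersect an approximating sequence with $S_+$. Within this paper's framework the fact you invoke is legitimately citable — it is exactly the assertion that $\nu_+$ takes values in countably-based $\Cu$-semigroups, taken from \cite{C20one} and used verbatim in the proof of \autoref{thm:LatCu} — and no circularity arises, since $\nu_+$ is defined on all of $\Cu^\sim$ with no (PD) hypothesis, independently of the statement being proved. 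So your proof stands as a reduction to that cited result. If you want to remove the dependency altogether, observe that you only need the largest element of $S_+$, not its full $\Cu$-structure, and that can be built directly from the basis: let $B'$ be the (countable, nonempty) set of basis elements dominated by some positive element, choose for each $b\in B'$ a positive $y_b\geq b$, and set $e:=\sup_N\,(y_{b_1}+\cdots+y_{b_N})$, which exists by \axiomO{1} since the partial sums increase. Given $y\in S_+$, write $y=\sup_n x_n$ with $x_n\ll x_{n+1}$ and pick $b_n\in B$ with $x_n\leq b_n\ll x_{n+1}$; then $b_n\leq y$, so $b_n\in B'$ and $x_n\leq b_n\leq y_{b_n}\leq e$, whence $y\leq e$. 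This uses only \axiomO{1}, \axiomO{2} and the countable basis, and the rest of your argument then goes through unchanged.
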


\begin{rmk}
Whenever $S$ is a positively directed $\Cu^\sim$-semigroup that has maximal elements, then $e_{S_{max}}$ is the only positive element of $S_{max}$ or, equivalently, the only positive maximal element of $S$. Also, we mention that any countably-based $\Cu^\sim$-semigroup has maximal elements.
\end{rmk}

\begin{lma}
\label{lma:PCequi}
Let $S$ be $\Cu^\sim$-semigroup that has maximal elements. Then the following are equivalent: 

(i) $S$ is positively directed.

(ii) For any $x\in S$, there exists a unique $p_x\in S_{max}$ such that $x+p_x\geq 0$. 

(iii) $S_{max}$ is an absorbing abelian group in $S$ whose neutral element $e_{S_{\max}}$ is positive.
\end{lma}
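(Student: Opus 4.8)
The plan is to establish the three statements cyclically, through \implStatements{i}{iii}, \implStatements{iii}{ii} and \implStatements{ii}{i}; the first implication is already available and the last is immediate, so all of the real work sits in \implStatements{iii}{ii}. For \implStatements{i}{iii} I would simply quote \autoref{prop:PCABGP}, whose conclusion is verbatim statement (iii). For \implStatements{ii}{i}, I note that any $p_x\in S_{max}$ is in particular an element of $S$, so the existence clause of (ii) is exactly the defining property (PD) of positive directedness; nothing else is needed there.

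The heart of the argument is \implStatements{iii}{ii}. Assuming (iii), I would first isolate the one order-theoretic fact I need, namely that $e_{S_{max}}$ is the \emph{unique} positive element of $S_{max}$: if $m\in S_{max}$ with $m\geq 0$, then adding its group inverse $-m\in S_{max}$ and using that addition is order-preserving gives $e_{S_{max}}=m+(-m)\geq 0+(-m)=-m$, and since $-m$ is maximal this forces $-m=e_{S_{max}}$, i.e.\ $m=e_{S_{max}}$. Now fix $x\in S$ and put $m:=x+e_{S_{max}}$, which lies in $S_{max}$ by the absorption property $S+S_{max}\subseteq S_{max}$. I would then set $p_x:=-m$, the group inverse of $m$ inside $S_{max}$, and verify $x+p_x=e_{S_{max}}\geq 0$: absorption again gives $x+p_x\in S_{max}$, while commutativity and associativity yield $(x+p_x)+e_{S_{max}}=(x+e_{S_{max}})+p_x=m+(-m)=e_{S_{max}}$; since $e_{S_{max}}$ is the neutral element of the group $S_{max}$ and $x+p_x$ already lies in $S_{max}$, the left-hand side collapses to $x+p_x$, whence $x+p_x=e_{S_{max}}$.

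For uniqueness I would take any $p\in S_{max}$ with $x+p\geq 0$. By absorption $x+p\in S_{max}$, so, being a positive element of $S_{max}$, it equals $e_{S_{max}}$ by the fact just established; then $m+p=(x+e_{S_{max}})+p=(x+p)+e_{S_{max}}=e_{S_{max}}$, exhibiting $p$ as the group inverse of $m$, so $p=-m=p_x$ by uniqueness of inverses. I expect the main obstacle to be bookkeeping rather than conceptual: one must consistently distinguish the group identity $e_{S_{max}}$ from the monoid zero $0$ and keep track of whether a given sum is read inside the group $S_{max}$ or in the ambient ordered monoid $S$. Beyond this, the only genuine ingredients are order-preservation of addition (to pin $e_{S_{max}}$ down as the unique positive maximal element) and absorption (to force the relevant sums back into $S_{max}$).
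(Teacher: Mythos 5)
Your proposal is correct and follows essentially the same route as the paper: both handle \implStatements{ii}{i} as immediate, quote \autoref{prop:PCABGP} for \implStatements{i}{iii}, and prove \implStatements{iii}{ii} by taking $p_x$ to be the group inverse of $x+e_{S_{max}}$ in $S_{max}$ and using absorption plus positivity of $e_{S_{max}}$. The only cosmetic difference is in the uniqueness step, where the paper invokes maximality of $p_x$ directly (from $r+x+p_x\geq p_x$) while you first isolate the fact that $e_{S_{max}}$ is the unique positive element of $S_{max}$; both arguments use the same ingredients.
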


\begin{proof}
That (ii) implies (i) is clear. That (i) implies (iii) is proved in \cite[Proposition 5.4]{C20one}.

Let us show now that (iii) implies (ii): Let $x\in S$ and write $e:=e_{S_{max}}$. Let $q:=x+e$. Note that $q$ belongs to $S_{max}$ by (iii). Denote by  $p_x$ the inverse of $q$ in $S_{max}$. We have $x+e+p_x=e$, and $x+p_x\in S_{max}$ by assumption. Therefore $x+p_x+e=x+p_x=e\geq 0$. Now suppose there exists another $r\in S_{max}$ such that $r+x\geq 0$. Then $r+x+p_x=p_x$. However $x+p_x=e$, hence $r=p_x$, which ends the proof.
\end{proof}

Notice that for a $\Cu$-semigroup $S$, we have that $S_{max}$ is either empty, or the trivial group consisting of the largest element of $S$.
Furthermore, the axioms (PD) and (PC) can be defined for ordered monoids and all the proofs above hold. We now define the notion of a \emph{positively stable submonoid} for positively directed $\Cu^\sim$-semigroup that will lead to the definition of a $\Cu^\sim$-ideal.

\begin{dfn}
Let $S$ be a positively directed $\Cu^\sim$-semigroup. Let $M$ be a submonoid of $S$. We say $M$ is \emph{positively stable} if it satisfies the following:

(i) $M$ is a positively directed ordered monoid.

(ii) For any $x\in S$, if $(x+P_x)\bigcap M\neq \emptyset$ then $x\in M$, where $P_x:=\{y\in S \mid x+y\geq 0\}$. 
\end{dfn}

Axiom (PD) ensures that $P_x\neq \emptyset$. In fact, $P_x$ is a Scott-open set in $S$ (so is $x+P_x$): $P_x$ is clearly an upper set and using $0\ll 0$ and (O2), and one can check that $P_x$ satisfies (ii) of \autoref{dfn:scottopen}. In particular, $S_+=P_0$ is Scott-open in $S$.
 
\begin{dfn}
\label{dfn:orderideal}
Let $S$ be a positively directed $\Cu^\sim$-semigroup. We say that $I\subseteq S$ is an \emph{order-ideal} (or \emph{ideal}) of $S$ if $I$ is a Scott-closed, positively stable submonoid of $S$. 
 
We say that $S$ is \emph{simple} if it only contains the trivial ideals $\{0\}$ and $S$. 
\end{dfn}

It is for the reader to check that any ideal $I$ of a (positively directed) $\Cu^\sim$-semigroup $S$ is a positively directed $\Cu^\sim$-semi\-group. Moreover, if $S$ is positively convex, then so is $I$. Finally, $I$ continuously order-embeds into $S$ (that is, the canonical inclusion $i:I\lhook\joinrel\longrightarrow S$ is a Scott-continuous order-embedding). 

We naturally want to define the ideal generated by an element. However, we cannot ensure that the intersection of ideals is still an ideal. In fact, being positively directed is not preserved under intersection, thus we define the ideal generated by an element abstractly as follows:

\begin{dfn}
Given $x\in S$, we define $\Idl(x)$ as the smallest ideal of $S$ containing $x$, that is, $x\in \Idl(x)$ and for any $J$ ideal of $S$ containing $x$ we have $J\supseteq \Idl(x)$. Note that this ideal might not exist.
\end{dfn}

Here, we offer an example of two ideals of a countably-based positively directed and positively convex $\Cu^\sim$-semigroup, whose intersection fails to be positively directed, and hence fails to be an ideal:

Let $S$ be the subset of ${\overline{\N}}^3\times\Z$ defined as follows:
\[
S:=\{((n_1,n_2,n_3),k)\in \overline{\N}^3\times\Z \mid k\geq 0, \text{ if }n_1=n_3=0, \text{ and }k=0, \text{ if }n_1=n_2=n_3=0\}.
\] 
We put on this set a component-wise sum and we define for any two pairs: $(g,k)\leq (h,l)$ if $g\leq h $ in ${\overline{\N}}^3 $ and $k=l$ in $\Z$. Notice that $S_+= {\overline{\N}}^3 \times \{0\}$. One can check that $(S,+,\leq)$ is a countably-based positively directed and positively convex $\Cu^\sim$-semigroup. 

Now consider $I_1:=((\overline{\N}\times \overline{\N}\times\{0\})\times\Z)\cap S$ and $I_2:=((\{0\}\times \overline{\N}\times \overline{\N} )\times\Z)\cap S$. Again, one can check that those are ideals of $S$ as defined earlier. However, $I_1\cap I_2 = ((\{0\}\times\overline{\N}_*\times\{0\})\times \Z_+)\sqcup \{0_S\}$ is not positively directed. Indeed, let $x:=((0,n,0),1)\in I_1\cap I_2$. Observe that any element $y\in I_1\cap I_2 $ is of the form $((0,n,0),k)$ for some $n\in\N$ and $k\geq 0$. Thus, there is no $y\in I_1\cap I_2 $ such that $x+y\geq 0$ and hence $I_1\cap I_2$ is not positively directed.

\begin{prop}
\label{prop:existenceideal}
Let $S$ be positively directed and positively convex $\Cu^\sim$-semigroup. Let $x$ be a positive element of $S$. Then $\Idl(x)$ exists and we have the following:
\[
Idl(x)=\{y\in S \mid \text{ there is } y'\in S \text{ with } 0\leq y+y'\leq\infty x\}
\vspace{-0cm}\]
\end{prop}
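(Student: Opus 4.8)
The plan is to set $J:=\{y\in S \mid \exists\, y'\in S,\ 0\le y+y'\le \infty.x\}$, prove directly that $J$ is an ideal containing $x$, and then check it is the smallest such; this yields both the existence of $\Idl(x)$ and the stated formula. Throughout I will use that $\infty.x=\sup_n n.x$ exists (the sequence $(n.x)_n$ is increasing as $x\ge 0$), that $x\le\infty.x$ and $\infty.x\ge 0$, and that $\infty.x$ is idempotent, $\infty.x+\infty.x=\infty.x$, which follows from (O4). I will also write $K:=\{z\in S\mid 0\le z\le\infty.x\}$.

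The heart of the matter, and what I expect to be the main obstacle, is the claim $J\cap S_+=K$: every positive element of $J$ already lies below $\infty.x$. One inclusion is trivial (take $y'=0$). For the converse, suppose $z\ge 0$ and $0\le z+w\le \infty.x$. Adding $\infty.x$ and using $0\le z$ gives $w+\infty.x\le z+w+\infty.x\le\infty.x$, so $w+\infty.x\le\infty.x$; then (PC), applied to the pair $w+\infty.x\le\infty.x$ together with $\infty.x\ge 0$, yields $(w+\infty.x)+\infty.x\ge 0$, that is $w+\infty.x\ge 0$ by idempotency. Hence $z\le z+(w+\infty.x)=(z+w)+\infty.x\le\infty.x$. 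The delicate point is precisely that $w$ need not be positive, so one cannot directly bound $z$ by $z+w$; the device is to replace $w$ by $w+\infty.x$, which (PC) forces to be positive while keeping the sum under $\infty.x$.

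With this claim in hand, checking that $J$ is an ideal is routine. It contains $x$ and is a submonoid (add witnesses, and use $\infty.x+\infty.x=\infty.x$). It is a lower set: if $a\le b$ with $0\le b+b'\le\infty.x$, then $a+b'\le\infty.x$, and (PC) gives $a+(b'+\infty.x)\in[0,\infty.x]$, so $a\in J$. For Scott-closedness, let $(y_n)_n$ increase in $J$ with supremum $y$; fixing a witness $w_0$ for $y_0$ and setting $\omega_0:=w_0+\infty.x$, I get $\omega_0\in J$ with $y_0+\omega_0=\infty.x$. Then each $y_n+\omega_0$ lies in $J$ and is $\ge y_0+\omega_0=\infty.x\ge 0$, so by the claim $y_n+\omega_0\le\infty.x$, whence $y_n+\omega_0=\infty.x$; passing to the supremum via (O4) gives $y+\omega_0=\infty.x$, so $y\in J$. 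Positive stability is equally direct: for (PD), the element $\omega:=y'+\infty.x$ satisfies $y+\omega=\infty.x\ge 0$ and lies in $J$ (with witness $y$); and the closure condition (ii) follows because a witness for a positive completion $y+y'\in J$ composes with a witness for $y+y'$ to produce a witness for $y$.

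Finally, for minimality let $I$ be any ideal with $x\in I$. Being a submonoid closed under suprema of increasing sequences, $I$ contains $\sup_n n.x=\infty.x$, and being a lower set it contains all of $K$. Given $y\in J$, a witness $y'$ puts the positive element $y+y'$ into $K\subseteq I$; since $y'\in P_y$, this says $(y+P_y)\cap I\neq\emptyset$, so positive stability of $I$ forces $y\in I$. Hence $J\subseteq I$, and therefore $J=\Idl(x)$.
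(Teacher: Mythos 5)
Your proof is correct and takes essentially the same approach as the paper: the same candidate set, the same key claim that every positive element of it is dominated by $\infty.x$ (proved via (PC) and idempotency of $\infty.x$), the same verification of the submonoid, lower-set, suprema and positive-stability conditions, and the same minimality argument through positive stability of an arbitrary ideal containing $x$. The only difference is cosmetic: your systematic normalization of witnesses by adding $\infty.x$ (so that $y+\omega=\infty.x$ exactly) slightly streamlines the paper's double-witness bookkeeping in the suprema step.
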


\begin{proof}
Let us define $I_x:=\{y\in S \mid \text{ there is } y'\in S \text{ with } 0\leq y+y'\leq\infty x\}$. We want to prove that $I_x$ is the $\Cu^\sim$-ideal generated by $x$.

First, we show that $I_x$ is a submonoid of $S$ that contains $x$. Using (O1), we know that $\infty x:=\sup\limits_{n\in \N}nx$ is a positive element. Moreover $0\leq 0+0\leq\infty x$, hence $0\in I_x$. We also know that for any $n,m$ in $\overline{\N}$, $0\leq nx+mx\leq\infty x$. So we get that $\{nx \mid n\in\overline{\N}\}\subseteq I_x$. Let $y_1,y_2$ in $I_x $. Then one easily checks that $0\leq (y_1+y_2)+(y_1'+y_2')\leq 2 (\infty x)=\infty x$, hence $I_x $ is closed under addition. This proves it is a submonoid of $S$ that contains $x$. 

\emph{Claim}: $\infty x$ is a maximal positive element of $I_x$ (in fact, the unique maximal positive element of $I_x$). Let $y\in I_x$ such that $y\geq 0$. There exists $y'\in I_x$ such that $0\leq y+y'\leq \infty x$. Since $y\geq 0$, we get that $y'\leq y+y'\leq \infty x$. So, by axiom (PC) we deduce that $0\leq y'+\infty x$. Now we add $y$ on both sides to get that $y\leq y+y'+\infty x\leq 2(\infty x)=\infty x$. Therefore, for any positive element $y$ of $I_x$, $y\leq \infty x$, which proves the claim.

Let us now prove that $I_x$ is closed under suprema of increasing sequences. Let $(y_n)_n$ be an increasing sequence in $I_x$ and let $y$ be its supremum in $S$. Let $y_0'$ be such that $0\leq y_0+y_0'\leq \infty x$, where $y_0$ is the first term of $(y_n)_n$. Observe that $y_0'$ belongs to $I_x$. Since $I_x$ is closed under addition, for any $n\in \N$, we have $y_n+y_0'\in I_x$. Therefore we can choose $z_n\in I_x$ such that   $(0\leq)\,\, y_n+y_0'+z_n\leq \infty x$. Finally choose $z_n'\in I_x$ such that $0\leq z_n+z'_n\leq \infty x$. 

Thus, we have on the one hand that $0\leq y_n+y_0'\leq(y_n+y_0')+(z_n+z_n')$ and on the other hand that $(y_n+y_0'+z_n)+z_n'\leq\infty x+z_n'$ for any $n\in \N$. Now since $I_x$ is submonoid of $S$ that contains $x$ and $z_n'\in I_x$, we get that $\infty x+z_n'$ is a positive element of $I_x$. Now since $\infty x=2(\infty x)$, we have $(\infty x+z_n')=2(\infty x)+z_n'\geq \infty x $. By maximality of $\infty x$ in $I_x$, we get that $0\leq y_n+y_0'\leq \infty x$, for any $n\in\N$. Using axioms (O3) and (O4), we pass to suprema and we obtain $0\leq y+y_0'\leq \infty x$, that is, $y\in I_x$. So $I_x$ is closed under suprema of increasing sequences.

We also have to show that $I_x$ is positively stable. Take any $z\in S$ such that there exists $z'$ with $0\leq z+z'$ and $(z+z')\in I_x$. We know there is a $y\in I_x $ such that $0\leq z+z'+y\leq\infty x$. Hence $z\in I_x$.

Next, we check that $I_x$ is a lower set. Let $z\leq y$ with $y\in I_x$. We know that there exists $y'\in I_x$ such that $0\leq y+y'\leq \infty x$. Since $z+y'\leq y+y'$, we deduce by axiom (PC) that $0\leq z+y'+y+y'\leq 2(y+y')\leq \infty x$. Therefore $z\in I_x$, that is, $I_x$ is a lower set, which ends the proof that $I_x$ is an ideal of $S$ containing $x$.

Lastly, let $J$ be an ideal of $S$ containing $x$. Then it contains $\infty x=e_{({I_x})_{max}}$. Thus if $y\in I_x$, we know that there exists $y'\in I_x$ such that $0\leq y+y'\leq \infty x$, and therefore $y+y'\in (y+P_y)\cap J$. Since $J$ is positively stable, this implies that $y\in J$. We obtain $J\supseteq I_x$, which gives us that $I_x$
is the ideal generated by $x$.
\end{proof}

We mention that a notion of ideals has been defined in \autoref{dfn:orderideal} for positively directed $\Cu^\sim$-semigroups. However, the existence of an ideal generated by a positive element requires the axiom (PC). Thus, from now on we only consider positively directed and positively convex $\Cu^\sim$-semigroups (and this will be specified). 
 
In the context of \autoref{prop:existenceideal}, observe that $I_x$ defined in the proof is equal to $\Idl(x)$ and from now on, we denote by $I_x$ the ideal generated by a positive element $x$. Also observe that $I_x$ is positively directed and has maximal elements. Thus, by \autoref{prop:PCABGP}, we know that $({I_x})_{max}$ is an absorbing abelian group whose neutral element is $\infty x$ corresponding to the unique maximal positive element of $I_x$. 

\begin{cor}
\label{cor:comparisonemax0}
Let $S$ be a positively directed and positively convex $\Cu^\sim$-semigroup. Let $I$ be an ideal of $S$. Then $I$ has maximal elements if and only if $I$ is singly-generated by a positive element, for instance by its (unique) maximal positive element $e_{I_{max}}$.
\end{cor}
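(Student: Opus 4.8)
The plan is to identify the ideal $I$ with $\Idl(e)$, where $e := e_{I_{max}}$ is the positive neutral element of the maximal group of $I$. Since $I$ is itself a countably-based, positively directed and positively convex $\Cu^\sim$-semigroup (by \autoref{dfn:orderideal}), \autoref{prop:PCABGP} guarantees that $I_{max}$ is a non-empty absorbing abelian group with positive neutral element $e$, and by the subsequent remark $e$ is the unique positive maximal element of $I$. The first observation I would make is that $e$ is idempotent: as the identity of the group $I_{max}$ we have $e+e=e$, whence $n.e=e$ for all $n\geq 1$ and therefore $\infty.e=e$. Consequently the explicit formula for the ideal generated by a positive element, established in the preceding proposition, simplifies to $I_e=\{y\in S \mid \exists\, y'\in S,\ 0\leq y+y'\leq e\}$.

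The inclusion $I_e\subseteq I$ is immediate: $e$ is a positive element of $S$ lying in $I$, so $\Idl(e)=I_e$ exists and, being the smallest ideal containing $e$, is contained in the ideal $I$.

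For the reverse inclusion $I\subseteq I_e$, the key auxiliary fact I would prove is that every positive element $z$ of $I$ satisfies $z\leq e$. Indeed, since $I_{max}$ is absorbing, $z+e\in I_{max}$; and since $z\geq 0$, translation-invariance of the order gives $z+e\geq e$. As $e$ is maximal in $I$, this forces $z+e=e$, and then $z=z+0\leq z+e=e$, using $0\leq e$. With this in hand, take any $y\in I$. Applying \autoref{lma:PCequi}(ii) to the $\Cu^\sim$-semigroup $I$ yields $p_y\in I_{max}$ with $y+p_y\geq 0$; since $y+p_y$ is then a positive element of $I$, the auxiliary fact gives $0\leq y+p_y\leq e=\infty.e$, so $y\in I_e$. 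Combining the two inclusions, $I=I_e=\Idl(e)$, which exhibits $I$ as singly-generated by $e_{I_{max}}$.

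The main obstacle is not a deep one but rather the two bookkeeping observations that make the machinery apply: recognizing that the neutral element $e$ of the maximal group is idempotent (so that $\infty.e=e$, collapsing the generating formula), and proving that $e$ dominates every positive element of $I$. Both rely essentially on the absorbing property of $I_{max}$ and the maximality of $e$; once they are secured, membership in $I_e$ is read off directly from \autoref{lma:PCequi}.
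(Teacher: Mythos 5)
Your proof is correct, and it follows the paper's overall strategy of identifying $I$ with the ideal generated by $e:=e_{I_{max}}$, but the two inclusions are carried by different tools. For $I_e\subseteq I$, the paper takes $x\in I_e$, notes that the element $x+x'$ with $0\leq x+x'\leq e$ lies in $I$ (as $I$ is a lower set containing $e$), and concludes $x\in I$ from positive stability of $I$; you instead invoke the minimality of $\Idl(e)$ among ideals of $S$ containing $e$, which was established in the unlabeled proposition computing $\Idl(x)$ --- shorter and equally rigorous. For $I\subseteq I_e$, the paper uses the exact identity $x+p_x=e_{I_{max}}$ coming from \autoref{prop:PCABGP} together with positive stability of $I_{e_{I_{max}}}$; you use only $y+p_y\geq 0$ from \autoref{lma:PCequi} and then need your auxiliary domination fact that every positive element of $I$ lies below $e$ (proved via absorption of $I_{max}$ and maximality of $e$), after which membership in $I_e$ is read off from the explicit formula. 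The net effect is that you never appeal to positive stability at all, replacing it by minimality plus the membership formula, and you make explicit the idempotence $\infty.e=e$ that the paper uses silently when it writes the bound $\leq e_{I_{max}}$ instead of $\leq\infty.e_{I_{max}}$. Both routes are of comparable depth and rest on the same preceding machinery, so this is a legitimate, slightly more self-contained variant rather than a fundamentally new argument.
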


\begin{proof}
If $I$ has maximal elements, then by \autoref{prop:PCABGP} we know that ${I}_{max}$ is an absorbing abelian group whose neutral element $e_{I_{max}}$ is the unique maximal positive element of $I$. Thus $I_{e_{I_{max}}}$ exists. Obviously,  $I_{e_{I_{max}}}\subseteq I$. Now let $x\in I$. Using \autoref{lma:PCequi}, we can find $y\in I_{max}$ such that $x+y=e_{I_{max}}$. Since $I$ is positively stable, we deduce that $x\in I_{e_{I_{max}}}$ and that $I_{e_{I_{max}}}=I$. Conversely, if $I$ is singly-generated by a positive element $x$, then from the proof of \autoref{prop:existenceideal}, we know that $\infty x \in I_{max}$ which ends the proof. 
\end{proof}

\subsection{Complete lattice of ideals} We now study the set of ideals of a positively directed and positively convex $\Cu^\sim$-semigroup $S$, that we denote $\Lat(S)$. We in fact show that $\Lat(S)$ has a natural structure of complete lattice and that, moreover, we have a lattice isomorphism between $\Lat(A)$ and $\Lat(\Cu_1(A))$ for any $\CatCa$-algebra $A$ of stable rank one. The sublattice $\Lat_f(S)$ consisting of ideals singly-generated by a positive element (or equivalently ideals that have maximal elements) will also be of an interest since the latter isomorphism maps $\Lat_f(A)$ onto $\Lat_f(\Cu_1(A))$ for any $\CatCa$-algebra $A$ of stable rank one.

\begin{prg}
Let $A$ be a $\CatCa$-algebra of stable rank one. Using the alternative picture of the unitary Cuntz semigroup of \autoref{prg:newpicture}, it is almost immediate that an element $(x,k)\in \Cu_1(A)$ belongs to $\Cu_1(I)$ if and only if $x\in \Cu(I)$. 
This allows us to prove in a similar fashion to the Cuntz semigroup (see \cite[Section 5.1]{APT14}) that, for any $I\in\Lat(A)$, the inclusion map $i:I\lhook\joinrel\longrightarrow A$ induces an order-embedding $\Cu_1(i):\Cu_1(I)\longrightarrow \Cu_1(A)$ and that $\Cu_1(I)$ is in fact a Scott-closed positively directed submonoid of $\Cu_1(A)$. The fact that $\Cu_1(I)$ is positively stable in $\Cu_1(A)$ is also trivial and left to the reader. We conclude that $\Cu_1(I)$ is an ideal of $\Cu_1(A)$ for any $I\in\Lat(A)$. 

We recall that for a $\CatCa$-algebra $A$, we let $\Lat(A)$ denote the complete lattice of ideals of $A$ and we let $\Lat_f (A)$ denote the sublattice of ideals in $A$ that contain a full, positive element. Also, for a positively directed and positively convex $\Cu^\sim$-semigroup $S$, we let $\Lat(S)$ denote the set of ideals of $S$ and we let $\Lat_f (S)$ denote the set of ideals in $S$ that are singly-generated by a positive element. (We might just write singly-generated, for notation purposes.)
\end{prg}

\begin{prop}
\label{thm:LatCu}
Let $A$ be $\CatCa$-algebra of stable rank one. Then the map 
\[
    \begin{array}{ll}
    		\hspace{-1,27cm}\Phi:\Lat(A)\longrightarrow \Lat(\Cu_1(A))\\
    		I\longmapsto \Cu_1(I)
    \end{array}
\]
is an isomorphism of complete lattices that maps $\Lat_f(A)$ onto $\Lat_f(\Cu_1(A))$. In particular, $A$ is simple if and only if $\Cu_1(A)$ is simple.
\end{prop}

\begin{proof}
Since $(x,k)\in \Cu_1(A)$ belongs to $\Cu_1(I)$ if and only if $x\in \Cu(I)$, the proof of \cite[Proposition 5.1.10]{APT14} remains valid in our context. For the sake of completeness, we explicitly write the inverse map \[
    \begin{array}{ll}
    		\hspace{-2,1cm}\Psi:\Lat(\Cu_1(A))\longrightarrow \Lat(A)\\
    		J\longmapsto \{x\in A \mid ([xx^*],0)\in J_+\}
    \end{array}
\]
where $J_+$ is the $\Cu$-semigroup formed by the positive elements of $J$.
\end{proof}

\begin{rmk}
(i) We explicitly compute the lattice structure on $\Cu_1(A)$ for any $\CatCa$-algebra $A$ of stable rank one. Let $I,J\in\Lat(A)$, then $\Cu_1(I)\wedge\Cu_1(J)= \Cu_1(I\cap J)$ and $\Cu_1(I)\vee\Cu_1(J)= \Cu_1(I+J)$. 

(ii) For a $\Cu^\sim$-semigroup $S$, we have that $\Lat(S)\simeq \Lat(S_+)$ and $\Lat_f(S)\simeq \Lat_f(S_+)$.

(iii) If $S$ is a countably-based $\Cu^\sim$-semigroup, then $\Lat_f(S)=\Lat(S)$.
\end{rmk}

\subsection{Link with \texorpdfstring{$\Cu$}{Cu} and \texorpdfstring{$\K_1$}{K1}} It has been shown in \cite{C20one} that the functor $\Cu$ and the functor $\K_1$ can be seen as the positive cone and the maximal elements of $\Cu_1$ respectively, through natural isomorphisms using the functors $\nu_+:\Cu^\sim\longrightarrow \Cu$ and $\nu_{max}:\Cu^\sim\longrightarrow \AbGp$. We now investigate further, applying these results at level of ideals and morphisms, in order to unravel the information contained within the functor $\Cu_1$, about the lattice of ideals of $\CatCa$-algebras of stable rank one and their morphisms.

\begin{lma}
\label{lma:commutsquareideal}
Let $S,T$ be positively directed and positively convex $\Cu^\sim$-semigroups. Let $\alpha:S\longrightarrow T$ be a $\Cu^\sim$-morphism and let $I,I'\in\Lat_f(S)$ be such that $I\subseteq I'$. Then:

$J:=I_{\alpha(e_{I_{max}})}$ and $J':=I_{\alpha(e_{I'_{max}})}$ are the smallest ideals of $T$ that contain $\alpha(I)$ and $\alpha(I')$respectively. Moreover, $J$ and $J'$ belong $\Lat_f(T)$ and $J\subseteq J'$. 
Thus, the following square is commutative:
\vspace{-0,2cm}\[
\xymatrix{
I\ar[d]_{\alpha_{|I}}\ar[r]^{i} & I'\ar[d]^{\alpha _{|I'}} \\
J\ar[r]_{i} & J'
}
\vspace{-0,2cm}\]
where $i$ stands for canonical inclusions and $\alpha_{|I}:I\longrightarrow J$ is the restriction of $\alpha$ that has codomain $J$, respectively $\alpha_{|I'}:I'\longrightarrow J'$.
\end{lma}

\begin{proof}
Since $\alpha$ is order-preserving, $\alpha_{|J}$ and $\alpha_{|J'}$ are well-defined. Besides, we know that for any $y\in I$, there exists $y'$, such that $0\leq y+y'\leq e_{I_{max}}$, hence  we have $0\leq \alpha(y)+\alpha(y')\leq \infty.\alpha(e_{I_{max}})$. Therefore $\alpha(y)\in J$ and we obtain that $\alpha(I)\subseteq J$, respectively $\alpha(I')\subseteq J'$. Since $I\subseteq I'$, we deduce that $e_{I_{max}}\leq e_{I'_{max}}$ and hence $\alpha(e_{I_{max}})\leq \alpha(e_{I'_{max}})$. Thus $J\subseteq J'$ which proves that the square is commutative.
\end{proof}

In the sequel, when we speak of the restriction of a $\Cu^\sim$-morphism to a singly-generated ideal, we will always refer, unless stated otherwise, to the map defined above. That is, we also restrict the codomain to the smallest singly-generated containing the image of the latter ideal.
Using notations of \autoref{lma:commutsquareideal}, notice that $\alpha_{|I}(e_{I_{max}})=e_{J_{max}}$.

\begin{prop}\cite[Proposition 5.5]{C20one}
\label{dfn:Sinfinite}
Let $\alpha:S\longrightarrow T$ be a $\Cu^\sim$-morphism between positively directed $\Cu^\sim$-semigroups $S,T$ that have maximal elements.
Then $\alpha_{max}:= \alpha_{|S_{max}} + e_{T_{max}}$ is a $\AbGp$-morphism from $S_{max}$ to $T_{max}$.
Thus we obtain a functor
\vspace{-0,23cm}\[
	\begin{array}{ll}
		\nu_{max}: \Cu^\sim \longrightarrow \AbGp\\
		\hspace{1,3cm} S \longmapsto S_{max}\\
		\hspace{1,35cm} \alpha \longmapsto \alpha_{max}
	\end{array}
\]
\end{prop}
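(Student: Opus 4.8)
The plan is to verify, in turn, the three things that make $\nu_{max}$ a well-defined functor into $\AbGp$: that $\alpha_{max}$ actually carries $S_{max}$ into $T_{max}$, that its restriction there is a group homomorphism, and that the assignment respects identities and composition. The only structural inputs I would need are \autoref{prop:PCABGP} (so that $S_{max}$ and $T_{max}$ are absorbing abelian groups with \emph{positive} neutral elements), the remark following it (that $e_{T_{max}}$ is the \emph{unique} positive element of $T_{max}$), and the elementary observation that a $\Cu^\sim$-morphism, being an order-preserving monoid morphism, sends positive elements to positive elements, since $x\geq 0$ forces $\alpha(x)\geq\alpha(0)=0$.

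First I would check that $\alpha_{max}$ lands in $T_{max}$: for \emph{any} $s\in S$ the element $\alpha(s)+e_{T_{max}}$ lies in $T_{max}$ purely because $T_{max}$ is absorbing, so this is automatic on $S_{max}$. The same absorbing property, combined with uniqueness of the positive maximal element, is exactly what makes $\alpha_{max}$ preserve neutral elements. Indeed $e_{S_{max}}\geq 0$ gives $\alpha(e_{S_{max}})\geq 0$, whence $\alpha(e_{S_{max}})+e_{T_{max}}$ is a positive element of $T_{max}$ and must therefore equal $e_{T_{max}}$; that is, $\alpha_{max}(e_{S_{max}})=e_{T_{max}}$.

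Next I would establish additivity. For $s_1,s_2\in S_{max}$, expanding and using commutativity of $+$ yields $\alpha_{max}(s_1)+\alpha_{max}(s_2)=\alpha(s_1)+\alpha(s_2)+e_{T_{max}}+e_{T_{max}}$; since $e_{T_{max}}$ is the idempotent group unit of $T_{max}$, i.e. $e_{T_{max}}+e_{T_{max}}=e_{T_{max}}$, and since $\alpha$ is additive, this collapses to $\alpha(s_1+s_2)+e_{T_{max}}=\alpha_{max}(s_1+s_2)$. Functoriality then follows by the same mechanism: $(\id_S)_{max}$ restricts to $\id_{S_{max}}$ because $e_{S_{max}}$ is the unit of $S_{max}$, and for composable $\alpha\colon S\to T$ and $\beta\colon T\to U$ the extra term in $(\beta_{max}\circ\alpha_{max})(s)=\beta(\alpha(s))+\beta(e_{T_{max}})+e_{U_{max}}$ is positive and hence absorbed, $\beta(e_{T_{max}})+e_{U_{max}}=e_{U_{max}}$, leaving exactly $(\beta\circ\alpha)_{max}(s)$.

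The computations are routine; the one point requiring care, and what I regard as the crux, is that the abelian group $S_{max}$ does not have $0$ as its neutral element but the positive idempotent $e_{S_{max}}$. Thus the twist by $e_{T_{max}}$ in the definition $\alpha_{max}:=\alpha+e_{T_{max}}$ is not cosmetic: it is precisely the correction that repairs the failure of $\alpha$ to send $e_{S_{max}}$ to $e_{T_{max}}$. Every nontrivial step above ultimately reduces to the interplay of two facts -- that each neutral element is the unique positive element of its group of maximal elements, and that it is idempotent for the ambient monoid addition -- so the main obstacle is simply to keep track of these shifted units rather than any genuine difficulty.
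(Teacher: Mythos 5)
Your proof is correct. The paper itself gives no proof of this statement --- it is quoted directly from \cite[Proposition 5.5]{C20one} --- but your argument is exactly the natural one built from the ingredients the paper recalls: the absorbing abelian group structure of $T_{max}$ (\autoref{prop:PCABGP}), the uniqueness of the positive element $e_{T_{max}}$ of $T_{max}$ (the remark following that proposition), and the fact that an ordered monoid morphism preserves positivity. In particular, the two reductions you single out --- idempotence $e_{T_{max}}+e_{T_{max}}=e_{T_{max}}$ to get additivity, and the absorption identity $\beta(e_{T_{max}})+e_{U_{max}}=e_{U_{max}}$ (a positive element of $U_{max}$ must equal $e_{U_{max}}$) to get compatibility with composition --- are sound and are precisely where the hypotheses enter.
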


In order to be well-defined as a functor, $\nu_{max}$ should have the full subcategory of positively directed $\Cu^\sim$-semigroups that have maximal elements as domain, that we also denote by $\Cu^\sim$. Observe that $\Cu_1(\CatCa_{\sr1,\sigma})$ belongs to the latter full subcategory, where $\CatCa_{\sr1,\sigma}$ is the full subcategory of separable $\CatCa$-algebras of stable rank one.

In the next theorem, we use the picture of the $\Cu_1$-semigroup described in \autoref{prg:newpicture}.

\begin{thm}\cite[Theorem 5.7]{C20one}
\label{thm:naturaltransfolma}
Let $A$ be either a separable or a simple $\sigma$-unital $\CatCa$-algebra of stable rank one. We have the following natural isomorphisms in $\Cu$ and $\AbGp$ respectively:
\vspace{-0cm}\[
	\hspace{0cm}\begin{array}{ll}
		\Cu_1(A)_+\simeq \Cu(A) \hspace{2,4cm} \Cu_1(A)_{max}\simeq \K_1(A)\\
		\hspace{0,35cm}(x,0)\longmapsto x \hspace{3,3cm} (\infty_A,k)\longmapsto k
	\end{array}
\]
In fact, we have the following natural isomorphisms: $\nu_+\circ\Cu_1\simeq \Cu$ and $\nu_{max}\circ\Cu_1\simeq \K_1$.\end{thm}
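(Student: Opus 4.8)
The plan is to work entirely inside the concrete picture of \autoref{prg:newpicture}, writing elements of $\Cu_1(A)$ as pairs $(x,k)$ with $x\in\Cu(A)$ and $k\in\K_1(I_x)$, and to reduce everything to the explicit formulas for the order and addition given there. The first step is to pin down the two subsemigroups $\Cu_1(A)_+$ and $\Cu_1(A)_{max}$ as sets. For the positive cone, $(x,k)\geq(0,0)$ unwinds to $0\leq x$ (automatic) together with $\delta_{I_0I_x}(k)=k$; since $I_0=\{0\}$ forces $\K_1(I_0)=0$, this reads $k=0$, so $\Cu_1(A)_+=\{(x,0):x\in\Cu(A)\}$. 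For the maximal elements, I would first observe that if $(x,k)$ is maximal then $x$ must be the largest element $\infty_A$ of $\Cu(A)$: otherwise $(x,k)\leq(\infty_A,\delta_{I_xI_{\infty_A}}(k))$ is a strict inequality, contradicting maximality. Conversely $(\infty_A,k)$ is maximal, since $(\infty_A,k)\leq(y,l)$ forces $y=\infty_A$ and then $l=\delta_{I_{\infty_A}I_{\infty_A}}(k)=k$. As $I_{\infty_A}$ is all of $\Cu(A)$, which corresponds to the ideal $A$, we obtain $\Cu_1(A)_{max}=\{(\infty_A,k):k\in\K_1(A)\}$.

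The second step is to check the two candidate maps are isomorphisms in the respective categories. The map $(x,0)\mapsto x$ is a bijection onto $\Cu(A)$; from the formulas of \autoref{prg:newpicture} one reads off $(x,0)\leq(y,0)\iff x\leq y$ and $(x,0)+(y,0)=(x+y,0)$, so it is an order-isomorphism of monoids. Because the compact-containment relation and suprema of increasing sequences are purely order-theoretic notions, an order-isomorphism onto the $\Cu$-semigroup $\Cu(A)$ automatically transports both, so it is a $\Cu$-isomorphism; this gives $\Cu_1(A)_+\simeq\Cu(A)$. For the maximal elements, $(\infty_A,k)\mapsto k$ is a bijection onto $\K_1(A)$, and $(\infty_A,k)+(\infty_A,l)=(\infty_A,k+l)$ shows it is a group isomorphism, using that $\Cu_1(A)_{max}$ is an abelian group by \autoref{prop:PCABGP} with neutral element $(\infty_A,0)$; this gives $\Cu_1(A)_{max}\simeq\K_1(A)$.

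The final and most delicate step is naturality, i.e. upgrading these pointwise isomorphisms to natural isomorphisms $\nu_+\circ\Cu_1\simeq\Cu$ and $\nu_{max}\circ\Cu_1\simeq\K_1$. Fix a $^*$-homomorphism $\phi:A\to B$ and write $\Cu_1(\phi)=(\Cu(\phi),\{\K_1(\phi_{|I})\})$ as in \autoref{prg:newpicture}. Naturality for $\nu_+$ is immediate: on $(x,0)$ the map $\Cu_1(\phi)$ returns $(\Cu(\phi)(x),0)$, which corresponds under $(x,0)\mapsto x$ exactly to $\Cu(\phi)(x)$, so the relevant square commutes. The hard part will be naturality for $\nu_{max}$, because $\nu_{max}(\Cu_1(\phi))=\Cu_1(\phi)+e_{\Cu_1(B)_{max}}$ carries the twist of \autoref{dfn:Sinfinite}. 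Here I would evaluate on $(\infty_A,k)$: one has $\Cu(\phi)(\infty_A)=\infty_J$, where $J:=\overline{B\phi(A)B}$ (the image of the largest element generates the ideal generated by $\phi(A)$), and $\K_1(\phi_{|A})(k)\in\K_1(J)$. Adding $e_{\Cu_1(B)_{max}}=(\infty_B,0)$ then lands in $(\infty_B,\delta_{JB}(\K_1(\phi_{|A})(k)))$, and the key identity $\K_1(\phi)=\delta_{JB}\circ\K_1(\phi_{|A})$, coming from the factorization $A\to J\hookrightarrow B$ of $\phi$, shows this equals $(\infty_B,\K_1(\phi)(k))$. Under $(\infty_A,k)\mapsto k$ this is precisely $\K_1(\phi)$, so the square commutes. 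The main care needed is therefore in tracking these two ideal-level facts, namely the identification $\Cu(\phi)(\infty_A)=\infty_J$ and the decomposition of $\K_1(\phi)$ through $J$, which together absorb the twist in $\alpha_{max}$.
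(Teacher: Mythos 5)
The paper never proves this statement at all---it is imported verbatim from \cite[Theorem 5.7]{C20one} as a recalled result---so there is no internal proof to compare against; judged on its own, your argument is correct and follows exactly the route the cited source (and the statement's own notation) suggests: identify $\Cu_1(A)_+$ and $\Cu_1(A)_{max}$ inside the ideal-indexed picture of \autoref{prg:newpicture}, check the two maps are isomorphisms in $\Cu$ and $\AbGp$, and verify naturality by tracking $\Cu_1(\phi)=(\Cu(\phi),\{\K_1(\phi_{|I})\}_{I})$, with the twist in $\alpha_{max}=\alpha+e_{T_{max}}$ correctly absorbed by the two ideal-level identities $\Cu(\phi)(\infty_A)=\infty_J$ and $\K_1(\phi)=\delta_{JB}\circ\K_1(\phi_{|A})$. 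The only blemish is notational: the condition for $(0,0)\leq(x,k)$ is $\delta_{I_0I_x}(0)=k$, not $\delta_{I_0I_x}(k)=k$ (the map $\delta_{I_0I_x}$ is defined on $\K_1(I_0)=0$), though the conclusion $k=0$ you draw from it is exactly right.
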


\begin{cor}
\label{cor:commutsquareidealA}
Let $A$ be either a separable or a simple $\sigma$-unital $\CatCa$-algebra of stable rank one. Let $I\in\Lat_f(A)$ be an ideal of $A$ that contains a full positive element and let $\phi:A\longrightarrow B$ be a $^*$-homomorphism. Write $\alpha:=\Cu_1(\phi)$ and $J:=\overline{B\phi(I)B}\in\Lat_f(B)$.  Let us use the notations of \autoref{prg:newpicture}, that is, $\alpha=(\alpha_0,\{\alpha_{I}\}_{I\in\Lat_f(A)})$. Then:

(i) $\nu_+(\alpha _{|\Cu_1(I)})={\alpha_0}_{|\Cu(I)}$ and $\nu_{max}(\alpha_{|I})=\alpha_I$.

(ii) Let $I'\in\Lat_f(A)$ such that $I'\supseteq I$. Then the following squares are commutative in their respective categories:
\vspace{-0,4cm}\[
\xymatrix{
\Cu(I)\ar[d]_{{\alpha_0}_{|\Cu(I)}}\ar[r]^{i} & \Cu(I')\ar[d]^{{\alpha_0}_{|\Cu(I')}} && \K_1(I)\ar[d]_{\alpha_{I}}\ar[r]^{\delta_{II'}} & \K_1(I')\ar[d]^{\alpha _{I'}}\\
\Cu(J)\ar[r]_{i} & \Cu(J')&& \K_1(J)\ar[r]_{\delta_{JJ'}} & \K_1(J') 
}
\]
where the maps $i$ stand for the natural inclusions in $\Cu$.
\end{cor}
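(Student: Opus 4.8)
The plan is to read both statements off the picture of \autoref{prg:newpicture} and the commutative square of \autoref{lma:commutsquareideal}, using the natural isomorphisms $\nu_+\circ\Cu_1\simeq\Cu$ and $\nu_{max}\circ\Cu_1\simeq\K_1$ of \autoref{thm:naturaltransfolma}. Part (i) says that these isomorphisms are compatible with passing to an ideal, and I would prove it by a direct computation on positive, resp.\ maximal, elements. Part (ii) then follows formally by applying the functors $\nu_+$ and $\nu_{max}$ to the square of \autoref{lma:commutsquareideal}(ii) and identifying the resulting arrows by means of (i).

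For (i), I would first pin down the codomain of the restriction. By \autoref{lma:commutsquareideal}(i) the restriction $\alpha_{|\Cu_1(I)}$ has codomain $I_{\alpha(e_{\Cu_1(I)_{max}})}$; since $e_{\Cu_1(I)_{max}}$ is $(\infty_I,0)$ and $\alpha(\infty_I,0)=(\Cu(\phi)(\infty_I),0)=(\infty_J,0)$ --- using that $\phi$ carries a full element of $I$ to a full element of $J=\overline{B\phi(I)B}$, so that $\Cu(\phi)(\infty_I)=\infty_J$ --- this codomain is exactly $\Cu_1(J)$. Applying $\nu_+$ amounts to restricting to positive cones, identified with $\Cu(I)$ and $\Cu(J)$ via $(x,0)\mapsto x$; since $\alpha(x,0)=(\Cu(\phi)(x),0)=(\alpha_0(x),0)$ in the picture, this gives precisely ${\alpha_0}_{|\Cu(I)}$. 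For the second equality, the maximal elements of $\Cu_1(I)$ are the $(\infty_I,k)$ with $k\in\K_1(I)$, identified with $\K_1(I)$ via $(\infty_I,k)\mapsto k$; since $\alpha(\infty_I,k)=(\infty_J,\K_1(\phi_{|I})(k))=(\infty_J,\alpha_I(k))$, applying $\nu_{max}$ and the identification $\Cu_1(J)_{max}\simeq\K_1(J)$ returns exactly $\alpha_I$.

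For (ii), I would apply \autoref{lma:commutsquareideal}(ii) to the $\Cu^\sim$-morphism $\alpha=\Cu_1(\phi)$ and the nested ideals $\Cu_1(I)\subseteq\Cu_1(I')$, whose associated image ideals are $\Cu_1(J)\subseteq\Cu_1(J')$; this yields a commutative square with vertical maps $\alpha_{|\Cu_1(I)},\alpha_{|\Cu_1(I')}$ and with the canonical inclusions as horizontal maps. Since $\nu_+$ and $\nu_{max}$ are functors (\autoref{thm:naturaltransfolma} and \autoref{dfn:Sinfinite}), applying each of them produces again a commutative square. By part (i) the vertical maps become ${\alpha_0}_{|\Cu(I)},{\alpha_0}_{|\Cu(I')}$ in $\Cu$ and $\alpha_I,\alpha_{I'}$ in $\AbGp$; moreover $\nu_+$ sends the inclusion $\Cu_1(I)\hookrightarrow\Cu_1(I')$ to the inclusion $\Cu(I)\hookrightarrow\Cu(I')$, which delivers the left-hand square directly.

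The one point requiring genuine care --- and the main obstacle --- is the behaviour of $\nu_{max}$ on the inclusion $i\colon\Cu_1(I)\hookrightarrow\Cu_1(I')$, since $i$ does not send maximal elements to maximal elements. Here one must use the defining formula $\nu_{max}(i)=i+e_{\Cu_1(I')_{max}}$ of \autoref{dfn:Sinfinite}: evaluating on $(\infty_I,k)$ and using the addition of \autoref{prg:newpicture} together with $\infty_I+\infty_{I'}=\infty_{I'}$ (valid as $I\subseteq I'$), one computes $(\infty_I,k)+(\infty_{I'},0)=(\infty_{I'},\delta_{II'}(k))$. Under the identification $\Cu_1(I')_{max}\simeq\K_1(I')$ this reads as $\delta_{II'}(k)$, so $\nu_{max}(i)=\delta_{II'}$; the same computation with $J,J'$ in place of $I,I'$ gives $\nu_{max}(\Cu_1(J)\hookrightarrow\Cu_1(J'))=\delta_{JJ'}$. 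These are exactly the top and bottom arrows of the right-hand square, which completes the proof.
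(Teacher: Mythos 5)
Your proposal is correct and takes essentially the same approach as the paper: pin down the codomain of the restriction as $\Cu_1(J)$, compute $\nu_+$ and $\nu_{max}$ of $\alpha_{|\Cu_1(I)}$ directly in the picture of \autoref{prg:newpicture} for (i), and then obtain (ii) by applying the functors $\nu_+$ and $\nu_{max}$ to the commutative square of \autoref{lma:commutsquareideal} and invoking (i). The only difference is that you explicitly verify $\nu_{max}(i)=\delta_{II'}$ for the inclusion $\Cu_1(I)\hookrightarrow\Cu_1(I')$ (and likewise for $J,J'$), a detail the paper's one-line proof of (ii) leaves implicit; this is a useful elaboration rather than a deviation.
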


\begin{proof}
(i) Using the isomorphisms of complete lattices of \autoref{thm:LatCu}, we get that $\Cu_1(J)$ belongs to $\Lat_f(\Cu_1(B))$ and is the smallest ideal of $\Cu_1(B)$ that contains $\alpha(\Cu_1(I))$. Hence, $\alpha_{|\Cu_1(I)}$ defined in \autoref{lma:commutsquareideal} has codomain $\Cu_1(J)$. We deduce that $\nu_+(\alpha_{|\Cu_1(I)})={\alpha_0}_{|\Cu_1(I)}$. Again, we write $\infty_J$ the maximal element of $\Cu(J)$. Finally, observe that $\nu_{max}(\alpha_{|I})(x,k)=(\alpha_0(x),\alpha_I(k))+(\infty_J,0)=(\infty_J,\alpha_I(k))$. 

(ii) Apply $\nu_+$ and $\nu_{max}$ to the square of \autoref{lma:commutsquareideal}, combined with the natural isomorphisms of \autoref{thm:naturaltransfolma} and condition (i) above to get the result.
\end{proof}

Observe that (ii) follows trivially from functoriality of $\Cu$ and $\K_1$ and also for any $I,I'\in\Lat(A)$ such that $I\subseteq I'$, but we illustrate here how it can also be derived from our methods. Furthermore, in order to be thorough, one would have to write $K_1(\phi_{|I}:I\longrightarrow J)$ instead of $\alpha_I$, since the latter map has only been defined for $I\in\Lat_f(A)$.


\section{Quotients in the category \texorpdfstring{$\Cu^\sim$}{Cu-tilde} and exactness of the functor \texorpdfstring{$\Cu_1$}{Cu1}}
\label{sec:xactness}
\subsection{Quotients} We first study quotients of positively directed and positively convex $\Cu^\sim$-semigroups, to then show that the functor $\Cu_1$ preserves quotients. In other words, we prove that $\Cu_1(A)/\Cu_1(I)\simeq \Cu_1(A/I)$ for any $I\in\Lat(A)$.

\begin{dfn}
Let $S$ be a positively directed and positively convex $\Cu^\sim$-semigroup.
Let $I$ be an ideal of $S$. We define the following preorder on $S$: 
$x\leq_{I} y$ if there exists  $z \in I$ such that $x \leq z+y$. By antisymmetrizing this preorder, we get an equivalence relation on $S$, denoted $\sim_{I}$. We denote $\overline{x}:=[x]_{\sim_{I}}$.
\end{dfn} 

\begin{lma}
\label{lma:quotientideal}
Let $S$ be a positively directed and positively convex $\Cu^\sim$-semigroup. Let $I$ be an ideal of $S$. We canonically define
\hspace{0,2cm}$\left\{
\begin{array}{ll}
\overline{x}+ \overline{y}:=\overline{x+y}.\\
\overline{x}\leq \overline{y}$ \text{ if,} $x\leq_{I}y.
\end{array}
\right.$ 
and \hspace{0,2cm}$S/I:=(S/\!\!\sim_{I},+,\leq)$.

\vspace{0,2cm}Then $S/I$ is a positively directed and positively convex $\Cu^\sim$-semigroup. Also, $S \longrightarrow S/I$ is a surjective $\Cu^\sim$-morphism. 
\end{lma}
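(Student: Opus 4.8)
The plan is to realize $S/I$ as an ordered monoid, then verify the Cuntz axioms \axiomO{1}--\axiomO{4} together with countable basedness, and finally check (PD) and (PC); throughout, the quotient map $\pi\colon x\mapsto\overline x$ will emerge as a surjective $\Cu^\sim$-morphism. For the algebraic set-up I would check that $\leq_I$ is a preorder compatible with addition (reflexivity from $0\in I$, transitivity and additivity from $I$ being a submonoid), so that $\sim_I$ is a monoid congruence and both $\overline x+\overline y:=\overline{x+y}$ and $\overline x\leq\overline y:\Leftrightarrow x\leq_I y$ descend unambiguously; antisymmetry on $S/I$ holds by the very construction of $\sim_I$. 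Thus $(S/I,+,\leq)$ is an ordered monoid with neutral element $\overline 0$ and $\pi$ is a surjective, order-preserving monoid morphism. I would also record the useful fact that every positive element of $S/I$ has a positive representative: if $\overline 0\leq\overline y$ then $0\leq a+y$ for some $a\in I$, and using axiom (PD) inside $I$ one checks $a+y\sim_I y$, so $y_0:=a+y\geq 0$ represents $\overline y$.

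The core of the proof is \axiomO{1}. First, increasing sequences lift: given $\overline{x_1}\leq\overline{x_2}\leq\cdots$ I build recursively an honestly increasing $(y_n)$ in $S$ with $\overline{y_n}=\overline{x_n}$, by $y_1:=x_1$ and, having $y_n$, choosing $w_n\in I$ with $y_n\leq w_n+x_{n+1}$ (possible since $\overline{y_n}=\overline{x_n}\leq\overline{x_{n+1}}$) and setting $y_{n+1}:=w_n+x_{n+1}$. Then $y:=\sup_n y_n$ exists by \axiomO{1} in $S$, and $\overline y$ is visibly an upper bound of $(\overline{x_n})$. The hard part is that $\overline y$ is the \emph{least} upper bound, i.e.\ that $\overline{y_n}\leq\overline t$ for all $n$ forces $\overline y\leq\overline t$; this is exactly where the $\Cu$-argument of \autoref{prg:quotientcu}—absorbing all witnesses into the largest element of $I$—breaks down, since in $\Cu^\sim$ the ideal $I$ has no largest element, only a largest positive element $e:=e_{I_{max}}$. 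I would get around this by writing $y_n\leq u_n+t$ with $u_n\in I$ and bounding each witness by the maximal element $m_n:=u_n+e\in I_{max}$ (note $u_n\leq m_n$ since $e\geq 0$). Applying the homomorphism $j\colon S\to S_{max}$, $j(s)=s+e_{S_{max}}$ of \autoref{prop:PCABGP}, to $y_n\leq m_n+t$ and using that an increasing sequence of maximal elements is constant (they are pairwise incomparable), one gets $\hat g:=j(y_n)=m_n+j(t)$ for all $n$; cancelling $j(t)$ in the abelian group $S_{max}$ shows all $m_n$ equal a single $m\in I_{max}$. Hence $y_n\leq m+t$ for every $n$, so $y\leq m+t$ and $\overline y\leq\overline t$. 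This yields \axiomO{1} and, simultaneously, that $\pi$ preserves suprema.

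The remaining axioms follow once I show $\pi$ preserves the way-below relation: if $a\ll b$ in $S$ and $(\overline{w_k})$ is increasing with $\sup_k\overline{w_k}\geq\overline b$, I lift $(\overline{w_k})$ to an honestly increasing $(w_k')$ as above, so $b\leq u+\sup_k w_k'=\sup_k(u+w_k')$ for some $u\in I$ by \axiomO{4}; then $a\ll b$ gives $a\leq u+w_k'$ for some $k$, i.e.\ $\overline a\leq\overline{w_k}$, proving $\overline a\ll\overline b$. With this, \axiomO{2} follows by pushing a $\ll$-increasing sequence for a representative through $\pi$ and using that $\pi$ preserves suprema, while \axiomO{3} and \axiomO{4} transfer from $S$ since $\pi$ preserves $+$, $\ll$ and suprema; countable basedness of $S/I$ follows by taking $\pi(B)$ for a countable basis $B$ of $S$ and invoking \axiomO{2} in $S/I$.

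Finally I would verify the two extra axioms. Axiom (PD) is immediate: for $\overline x\in S/I$, if $x+p_x\geq 0$ in $S$ then $\overline x+\overline{p_x}\geq\overline 0$. For (PC), suppose $\overline y\geq\overline 0$ and $\overline x\leq\overline y$; pick a positive representative $y_0\geq 0$ of $\overline y$ and $b\in I$ with $x\leq b+y_0$, and use the unique $p_b\in I_{max}$ with $b+p_b=e$ (\autoref{cor:comparisonemax0}) to get $x+p_b\leq e+y_0$ with $e+y_0\geq 0$; applying (PC) in $S$ gives $(x+p_b)+(e+y_0)\geq 0$, and since $p_b+e\in I$ this reads $\overline{x+y}=\overline{x+y_0}\geq\overline 0$. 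Altogether $S/I$ is a countably-based, positively directed and positively convex $\Cu^\sim$-semigroup and $\pi$ is a surjective $\Cu^\sim$-morphism. I expect the single genuine obstacle to be the least-upper-bound step of \axiomO{1}: it is the only place where the non-positivity of the order really bites, and resolving it is precisely what forces passing through the group $I_{max}\subseteq S_{max}$ and cancelling there, in place of the largest-element absorption available in $\Cu$.
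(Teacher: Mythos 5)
Your proposal follows the same skeleton as the paper's proof: the ordered-monoid structure on $S/I$, the two lifting facts (comparable representatives for $\overline{x}\leq\overline{y}$, and increasing lifts of increasing sequences), the transfer of \axiomO{2}--\axiomO{4} and of a countable basis through the quotient map, and then the verification of (PD) and (PC). Your (PD) and (PC) arguments are correct and essentially those of the paper (where you use $p_b$ with $b+p_b=e_{I_{max}}$, the paper uses witnesses $z,w,z'\in I$; incidentally your $p_b$ comes from \autoref{lma:PCequi} applied to $I$, not from \autoref{cor:comparisonemax0}). You have also correctly isolated the real difficulty, which the paper handles by deferring to \cite[\S 5.1.2]{APT14}: there the least-upper-bound half of \axiomO{1} is settled by absorbing all witnesses $w_n\in I$ into the single element $\sup_n\sum_{k=0}^{n}w_k\in I$, an absorption that rests on $w_n\leq \sum_k w_k$, i.e.\ on positivity of the witnesses, which is precisely what is unavailable in $\Cu^\sim$.

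However, your repair of that step has a genuine gap. From $y_n\leq m_n+t$ with $m_n\in I_{max}$ you correctly get $j(y_n)=m_n+j(t)$ and that $\hat g:=j(y_n)$ is constant; but you cannot ``cancel $j(t)$'' to conclude that the $m_n$ coincide. Cancellation is valid only among elements of the group $S_{max}$, and $m_n\notin S_{max}$: a maximal element of the ideal $I$ is in general not maximal in $S$ (already in the $\Cu$-case, $\infty_I<\infty_S$ whenever $I\neq S$). Adding the $S_{max}$-inverse of $j(t)$ to the identity $m_n+j(t)=m_{n'}+j(t)$ yields only $m_n+e_{S_{max}}=m_{n'}+e_{S_{max}}$, i.e.\ equality of the images of the $m_n$ under the group homomorphism $I_{max}\to S_{max}$, $m\mapsto m+e_{S_{max}}$. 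That homomorphism can be far from injective: for $S=\Cu_1(A)$ and $I=\Cu_1(I_0)$ it is the map $\K_1(I_0)\to\K_1(A)$ induced by the inclusion, for instance the zero map $\Z\to 0$ when $A=C_0([0,1))$ and $I_0=C_0((0,1))$, both separable of stable rank one. (Monoid cancellation against an element outside the group is hopeless in this setting: $1+\infty=2+\infty$ in $\overline{\N}$.) So what your argument actually yields is $y_n\leq t+m_1+d_n$ with $d_n$ in the kernel of $I_{max}\to S_{max}$, and nothing allows you to strip off the $d_n$: agreement of the witnesses' images in $S_{max}$ is strictly weaker than producing one witness in $I$ valid for all $n$. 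This is exactly the step you flagged as the crux, and it remains open in your proposal. Note that in the concrete situation the rescue comes from exactness of $\K_1$-sequences (as in the proof of \autoref{thm:isoquotient}): there the admissible witnesses for each $n$ form a coset of $\ker\bigl(\K_1(I_0)\to\K_1(I_0\vee I_t)\bigr)$, a subgroup independent of $n$, so the nested sets of witnesses are in fact constant. Your argument would need agreement in the maximal-element group of the ideal generated by $I$ together with $t$, not in $S_{max}$ --- and even then one must still convert the resulting bound, which a priori lives in that larger ideal, into a witness lying in $I$.
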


\begin{proof}
Let $x,y$ be in $S$. It is not hard to check that the sum and order considered are well-defined, that is, they do not depend on the representative chosen. Let us show that $S/I$ equipped with this sum and order belongs to $\CatoM$. Let $x_1,x_2$ and $y_1,y_2$ be elements in $S$ such that $\overline{x_1}\leq\overline{x_2} $ and $\overline{y_1}\leq \overline{y_2} $. There exist $z_1,z_2$ in $I$ such that $x_1+y_1\leq x_2+z_1+y_2+z_2$, that is, $\overline{x_1+y_1}\leq \overline{x_2+y_2} $. Also notice that the quotient map $S\longrightarrow S/I$ is naturally a surjective $\CatoM$-morphism.

In order to show that $(S/I,+,\leq)$ satisfies axioms (O1)-(04), and that $S\longrightarrow S/I$ is a $\Cu^\sim$-morphism, we proceed in a similar way as in \cite[Section 5.1]{APT14} for quotients in the category $\Cu$ and we will not get into too many details. This is based on the following two facts: 

(1) For any $\overline{x}\leq \overline{y}$ in $S/I$ there exist representatives $x,y$ in $S$ such that $x\leq y$. 

Indeed we know that there are representatives $x,y_1$ in $S$ and some $z\in I$ such that $x\leq y_1 +z$. Since $y:=(y_1+z)\sim_I y_1$, the claim is proved. 

(2) For any increasing sequence $(\overline{x_k})_k$ in $S/I$, we can find an increasing sequence of representatives $(x_k)_k$ in $S$. 

This uses (1) and the fact that $I$ satisfies (O1). Then $z:=\sup\limits_{n\in\N} (\sum\limits_{k=0}^n z_k)$, where $z_k$ are the elements obtained from (1), is an element of $I$. We refer the reader to \cite[\S 5.1.2]{APT14} for more details.

Let $\overline{x}\in S/I$ and let $x$ be a representative of $\overline{x}$ in $S$. We know there exists $p_x$ in $S$ such that $x+p_x\geq 0$. Since $0\in I$, we get that $\overline{x}+\overline{p_x}\geq \overline{0}$, that is, $S/I$ is positively directed.

Lastly, let $\overline{x}, \overline{y}\in S/I$ such that $\overline{x}\leq \overline{y}$ and $0\leq\overline{y}$.  Let $x$ be a representative of $\overline{x}$ and $y$ a representative of $\overline{y}$ in $S$. Then there are elements $z,w\in I$ such that $x\leq y+z$ and $0\leq y+w$. Since $I$ is positively directed, there exists $z'\in I$ such that $z+z'\geq 0$. Now observe that $x+w+z'\leq y+z+w+z'=(y+w)+(z+z')$ with $y+w+z+z'\geq 0$. By assumption $S$ is positively convex, hence we have $x+w+z'+y+w+z+z'\geq 0$ and thus in $S/I$ we obtain $\overline{x}+\overline{y}\geq 0$, as desired.
\end{proof}

A priori $(S/I,+,\leq)$ is not positively ordered either. Indeed, one could take for example an algebra that has a non-trivial ideal $I$ with no $\K_1$-obstructions and such that $\K_1(A)$ is not trivial. Then $\Cu_1(A)/\Cu_1(I)$ would not be positively ordered.

\begin{lma}
\label{prop:factorquotient}
Let $S,T$ be positively directed and positively convex $\Cu^\sim$-semigroups. Let $\alpha:S\longrightarrow T$ be a $\Cu^\sim$-morphism. For any $I\in\Lat(S)$ such that $I\subseteq \alpha^{-1}(\{0_T\})$, there exists a unique $\Cu^\sim$-morphism $\overline{\alpha}:S/I\longrightarrow T$ such that the following diagram is commutative:
\[
\xymatrix{
S\ar[dr]_{\pi}\ar[rr]^{\alpha} &&T\\
& S/I\ar@{-->}[ur]_{\overline{\alpha}} &
}
\]
As a matter of fact, we have $\overline{\alpha}(\overline{x}):=\alpha(x)$, where $x\in S$ is any representative of $\overline{x}$.
Moreover, $\overline{\alpha}$ is surjective if and only if $\alpha$ is surjective.
\end{lma}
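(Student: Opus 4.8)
The plan is to verify the universal property of the quotient in three stages: (1) show $\overline{\alpha}$ is well-defined as a set map, (2) show it is a $\Cu^\sim$-morphism, and (3) verify uniqueness and the surjectivity equivalence. For well-definedness, I would take two representatives $x, x'$ of $\overline{x}$, so $x \leq_I x'$ and $x' \leq_I x$. This means there exist $z, z' \in I$ with $x \leq x' + z$ and $x' \leq x + z'$. Applying $\alpha$ and using that $\alpha$ is order-preserving together with $I \subseteq \alpha^{-1}(\{0_T\})$ (so $\alpha(z) = \alpha(z') = 0_T$), I get $\alpha(x) \leq \alpha(x') + \alpha(z) = \alpha(x')$ and symmetrically $\alpha(x') \leq \alpha(x)$. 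By antisymmetry of the order on $T$, $\alpha(x) = \alpha(x')$, so $\overline{\alpha}(\overline{x}) := \alpha(x)$ does not depend on the representative.

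Next I would check that $\overline{\alpha}$ is a $\Cu^\sim$-morphism. That it is an ordered monoid morphism follows the same pattern: additivity is immediate from $\overline{\alpha}(\overline{x} + \overline{y}) = \overline{\alpha}(\overline{x+y}) = \alpha(x+y) = \alpha(x) + \alpha(y)$, and for order-preservation, if $\overline{x} \leq \overline{y}$ then some representatives satisfy $x \leq y + z$ with $z \in I$, giving $\alpha(x) \leq \alpha(y) + \alpha(z) = \alpha(y)$. For preservation of the compact-containment relation and of suprema of increasing sequences, I would lean on fact (2) from the proof of \autoref{lma:quotientideal}: any increasing sequence in $S/I$ lifts to an increasing sequence of representatives in $S$, and the quotient map $\pi$ preserves suprema (being a $\Cu^\sim$-morphism). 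Since $\overline{\alpha} \circ \pi = \alpha$ on representatives and $\alpha$ is a $\Cu^\sim$-morphism preserving these structures, $\overline{\alpha}$ inherits the corresponding preservation properties; the key point is that the lifted sequence has supremum mapping onto $\sup \overline{x_k}$ under $\pi$.

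For uniqueness, any $\Cu^\sim$-morphism $\beta$ with $\beta \circ \pi = \alpha$ must satisfy $\beta(\overline{x}) = \beta(\pi(x)) = \alpha(x)$ for every representative $x$, which forces $\beta = \overline{\alpha}$ since $\pi$ is surjective. For the surjectivity equivalence: if $\alpha$ is surjective, then $\overline{\alpha} \circ \pi = \alpha$ is surjective, so $\overline{\alpha}$ is surjective; conversely, if $\overline{\alpha}$ is surjective, then $\alpha = \overline{\alpha} \circ \pi$ is a composite of surjections (recall $\pi$ is surjective by \autoref{lma:quotientideal}), hence surjective.

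I expect the main obstacle to be the careful verification that $\overline{\alpha}$ preserves suprema of increasing sequences, since the supremum in $S/I$ is only controlled through lifted representatives and one must argue that $\overline{\alpha}(\sup_k \overline{x_k}) = \sup_k \overline{\alpha}(\overline{x_k})$ rather than merely an inequality. The resolution is that by fact (2) the supremum $\overline{x}$ of $(\overline{x_k})_k$ equals $\pi(\sup_k x_k)$ for a suitably chosen increasing lift $(x_k)_k$, whence $\overline{\alpha}(\overline{x}) = \alpha(\sup_k x_k) = \sup_k \alpha(x_k) = \sup_k \overline{\alpha}(\overline{x_k})$, using that $\alpha$ preserves suprema; preservation of $\ll$ then follows from the standard characterization together with (O2). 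The rest of the argument is routine and parallels the treatment of quotients in the category $\Cu$ in \cite[Section 5.1]{APT14}.
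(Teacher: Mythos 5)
Your proposal is correct and follows essentially the same route as the paper: well-definedness and order-preservation via $\alpha(I)=\{0_T\}$, then the lifting facts (1) and (2) from the proof of \autoref{lma:quotientideal} to transfer $\ll$ and suprema, with uniqueness and the surjectivity equivalence handled by surjectivity of $\pi$. The paper is terser (it leaves the supremum/compact-containment verification and surjectivity "to the reader"), but your fleshed-out argument for $\overline{\alpha}(\sup_k \overline{x_k})=\sup_k\overline{\alpha}(\overline{x_k})$ is exactly the intended one.
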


\begin{proof}
By assumption $\alpha(I)=\{0\}$. Let us first prove that for any $x_1,x_2\in S$ such that $\overline{x_1}\leq\overline{x_2}$ in $S/I$, we have that $\alpha(x_1)\leq\alpha(x_2)$.
Let $x_1,x_2\in S$ be such that $x_1\lesssim_I x_2$. Then we know that there exists $z\in I$ such that $x_1\leq z+x_2$. Since $\alpha(z)=0$, we obtain that $\alpha(x_1)\leq\alpha(x_2)$.
We deduce that $\alpha$ is constant on the classes of $S/I$. Hence we can define $\overline{\alpha}:S/I\longrightarrow T$ by $\overline{\alpha}(\overline{x}):=\alpha(x)$, for any $x\in S$. By construction, the diagram is commutative. We only have to check that $\overline{\alpha}$ is a $\Cu^\sim$-morphism. Using facts (1) and (2) of the proof of \autoref{lma:quotientideal}, one can check that for any $\overline{x},\overline{y}\in S/I$ such that $\overline{x}\leq \overline{y}$ (resp $\ll$), there exists representatives $x,y$ in $S$ such that $x\leq y$ (resp $\ll$). Thus we easily obtain that $\overline{\alpha}$ is a $\Cu^\sim$-morphism which ends the first part of the proof. Surjectivity is clear and left to the reader.
\end{proof}

In the next theorem, we use the picture of the $\Cu_1$-semigroup described in \autoref{prg:newpicture}.

\begin{thm}
\label{thm:isoquotient}
Let $A$ be a $\CatCa$-algebra of stable rank one such that $\Lat_f(A)=\{\sigma\text{-unital ideals of }A\}$. \\Let $I\in\Lat(A)$. Let $\pi:A\longrightarrow A/I$ be the quotient map. Write $\pi^*:=\Cu_1(\pi):\Cu_1(A)\longrightarrow \Cu_1(A/I) $. Then $\pi^*((x,k))\leq \pi^*((y,l))$ if and only if $(x,k)\leq_{\Cu_1(I)}(y,l)$. Moreover $\pi^*$ is a surjective $\Cu^\sim$-morphism. Thus, this induces a $\Cu^\sim$-isomorphism 
\vspace{-0,1cm}\[
\Cu_1(A)/\Cu_1(I)\simeq \Cu_1(A/I).
\]
\end{thm}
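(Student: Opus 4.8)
The plan is to establish the three claims in sequence, using the alternative picture of $\Cu_1$ from \autoref{prg:newpicture} together with the description of quotients in $\Cu$ from \autoref{prg:quotientcu} and the functorial identities $\nu_+\circ\Cu_1\simeq \Cu$, $\nu_{max}\circ\Cu_1\simeq \K_1$ from \autoref{thm:naturaltransfolma}. First I would unravel what $\pi^* = \Cu_1(\pi)$ does in the picture $(\Cu(\pi),\{\K_1(\pi_{|J})\}_{J\in\Lat(A)})$: for $(x,k)$ with $x\in\Cu_f(J)$ and $k\in\K_1(J)$, one has $\pi^*((x,k)) = (\Cu(\pi)(x),\K_1(\pi_{|J})(k))$, landing in the summand indexed by $\overline{J}:=\overline{(A/I)\pi(J)(A/I)}$. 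The key input is the classical fact (from \autoref{prg:quotientcu}, \cite[Corollary 2]{CRS10}) that $\Cu(\pi)$ realizes the $\Cu$-quotient $\Cu(A)/\Cu(I)\simeq\Cu(A/I)$, so that $\Cu(\pi)(x)\leq\Cu(\pi)(y)$ holds exactly when $x\leq_{\Cu(I)} y$.

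Next I would prove the order characterization $\pi^*((x,k))\leq\pi^*((y,l))\iff (x,k)\leq_{\Cu_1(I)}(y,l)$. For the forward direction, the inequality $\pi^*((x,k))\leq\pi^*((y,l))$ forces $\Cu(\pi)(x)\leq\Cu(\pi)(y)$ in $\Cu(A/I)$, hence $x\leq_{\Cu(I)}y$, i.e.\ there is $w\in\Cu(I)$ with $x\leq w+y$; I would lift this to an element of $\Cu_1(I)$ of the form $(w,m)$ and then check, using the $\K_1$-component equality coming from $\pi^*$ (namely that the two maximal-element/$\K_1$ labels agree after applying $\K_1(\pi)$, so their difference lies in the image of $\K_1(I)$), that $(x,k)\leq (w,m)+(y,l)$ in $\Cu_1(A)$ for a suitable choice of $m$. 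The reverse direction is the formal one: if $(x,k)\leq_{\Cu_1(I)}(y,l)$ with witness $(w,m)\in\Cu_1(I)$, then applying the $\Cu^\sim$-morphism $\pi^*$ and using $\pi^*(\Cu_1(I))=\{0\}$ (since $\K_1(\pi_{|I})$ kills $\K_1(I)$ and $\Cu(\pi)$ kills $\Cu(I)$) gives $\pi^*((x,k))\leq\pi^*((y,l))$ directly.

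Once the order characterization holds, surjectivity of $\pi^*$ follows from surjectivity of $\Cu(\pi)$ (onto $\Cu(A/I)$) together with surjectivity of each $\K_1(\pi_{|J})\colon\K_1(J)\to\K_1(\overline J)$, which holds because $\pi_{|J}\colon J\to\overline J$ is itself a surjective $^*$-homomorphism with kernel $I\cap J$ and $\K_1$ is half-exact; in the picture this says every pair $(\bar x,\bar k)\in\Cu_1(A/I)$ is hit. I would then invoke \autoref{prop:factorquotient}: since $\Cu_1(I)\subseteq (\pi^*)^{-1}(\{0\})$, the morphism $\pi^*$ factors uniquely through $\overline{\pi^*}\colon\Cu_1(A)/\Cu_1(I)\to\Cu_1(A/I)$, which is surjective because $\pi^*$ is. Finally, the order characterization is exactly the statement that $\overline{\pi^*}$ is an \emph{order-embedding} (it reflects as well as preserves the order), hence injective; a surjective order-isomorphism of $\Cu^\sim$-semigroups that preserves suprema and the way-below relation is a $\Cu^\sim$-isomorphism, giving $\Cu_1(A)/\Cu_1(I)\simeq\Cu_1(A/I)$.

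I expect the main obstacle to be the forward direction of the order characterization, specifically the bookkeeping of the $\K_1$-labels. The subtlety is that the order in $\Cu_1$ (and in the quotient $\leq_{\Cu_1(I)}$) couples the positive-cone data with the $\K_1$-data through the transition maps $\delta_{IJ}$, so merely lifting the $\Cu$-level inequality $x\leq w+y$ is not enough; I must produce a \emph{compatible} $\K_1$-label $m\in\K_1(I)$ making the inequality hold in $\Cu_1(A)$, and verifying that such $m$ exists amounts to a diagram chase in $\K_1$ using the half-exactness of the sequence $0\to I\to A\to A/I\to 0$ (that the relevant class in $\K_1(\overline J)$ pulling back trivially under $\K_1(\pi)$ comes from $\K_1(I)$). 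The rest of the argument is formal manipulation of the quotient construction and the functorial identities already established.
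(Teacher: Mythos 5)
Your overall strategy is the same as the paper's: reduce the positive-cone part of the order characterization to the $\Cu$-quotient theorem of \autoref{prg:quotientcu}, handle the $\K_1$-labels by a diagram chase, get the backward implication formally from $\pi^*(\Cu_1(I))=\{0\}$, and conclude via \autoref{prop:factorquotient} that the induced surjective order-embedding is a $\Cu^\sim$-isomorphism. However, two of your steps have genuine gaps. The first is your justification of surjectivity: half-exactness of $\K_1$ gives exactness of $\K_1(I\cap J)\to\K_1(J)\to\K_1(\overline{J})$ \emph{at the middle term only}; it does not give surjectivity of $\K_1(J)\to\K_1(\overline{J})$. (For the Toeplitz extension $0\to\mathcal{K}\to\mathcal{T}\to C(\mathbb{T})\to 0$ one has $\K_1(\mathcal{T})=0$ while $\K_1(C(\mathbb{T}))=\Z$.) Surjectivity here is a stable-rank-one phenomenon — the vanishing of the index map, equivalently the fact that unitaries lift along quotients of stable rank one algebras — and that is exactly how the paper argues: it lifts $a_I$ to a positive element $a$ and then uses that $\her a$ has stable rank one to lift $u_I\in\mathcal{U}((\her a_I)^\sim)$ to a unitary of $\her a^\sim$.

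The second gap is in the forward direction of the order characterization: your witness $(w,m)$ need not be a legitimate element of $\Cu_1(I)$. In the picture of \autoref{prg:newpicture} the label $m$ must lie in $\K_1(I_w)$, the $\K_1$-group of the ideal generated by $w$, whereas the diagram chase (half-exactness applied to $0\to I\to I_y+I\to I_{\overline{y}}\to 0$, via \autoref{cor:commutsquareidealA}) produces a class $l'\in\K_1(I)$, which has no reason to come from $\K_1(I_w)$ when $I_w\subsetneq I$: for instance if $I=I_1\oplus I_2$, the $\Cu$-level witness $w$ may be supported in $I_1$ while the obstruction class lives in $\K_1(I_2)$, and then no admissible $m\in\K_1(I_w)$ can make $(x,k)\leq (w,m)+(y,l)$ hold. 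The paper forecloses this by enlarging the witness: it takes $z:=\infty_I$, which is full in $\Cu(I)$ (so every class in $\K_1(I)$ is an admissible label for $z$) and still satisfies $x\leq y+z$. With these two repairs — stable rank one for surjectivity, and a full witness for the $\K_1$ chase — your argument becomes the paper's proof.
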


\begin{proof}
Let us start with the surjectivity of $\pi^*$.
Let $[(a_I,u_I)]\in \Cu_1(A/I)$ where $a_I\in ((A/I)\otimes\mathcal{K})_+$ and $u_I$ is a unitary element of $(\her a_I)^\sim$. As $\pi$ is surjective, we know there exists $a \in A\otimes\mathcal{K}_+$ such that $\pi(a)=a_I$. Moreover, $\her a$ has stable rank one, hence unitary elements of $(\her (a_I))^\sim=\pi^\sim(\her (a)^\sim)$ lift. Thus, we can find a unitary element $u$ in $\her (a)^\sim$ such that $\pi^\sim(u)=u_I$. One can then check that $\pi^*([(a,u)])=[(a_I,u_I)]$. 

Let us show the first equivalence of the theorem. Noticing that $\pi^*(\Cu_1(I))=\{0_{\Cu_1(A/I)}\}$ and that $\pi^*$ is order-preserving, one easily gets the converse implication. 

Now let $(x,k)$ and $(y,l)$ be elements of $\Cu_1(A)$ such that $\pi^*((x,k))\leq\pi^*((y,l))$. We write $(\overline{x},\overline{k}):=\pi^*((x,k))=(\pi^*_0(x),\pi^*_x(k))$ and $(\overline{y},\overline{l}):=\pi^*((y,l))=(\pi^*_0(y),\pi^*_y(l))$. Thus we have $\overline{x}\leq\overline{y}$ in $\Cu(A/I)$. By \autoref{prg:quotientcu}, we know that $\Cu(A/I)\simeq\Cu(A)/\Cu(I)$, where the isomorphism is induced by the natural quotient map $\pi:A\longrightarrow A/I$. Therefore, there exists $z\in \Cu(I)$, such that $x\leq y+z$ in $\Cu(A)$. Write $y':=y+z$. Now by \autoref{cor:commutsquareidealA} and \cite[Proposition 4 (ii)]{LR95}, we obtain the following exact commutative diagram: 
\vspace{-0,5cm}\[
\xymatrix{
 & \K_1(I_{x}) \ar[d]_{\delta_{I_{x}I_{y'}}}\ar[r]^{\pi^*_{I_x}} & \K_1(I_{\overline{x}}) \ar[d]^{\delta_{I_{\overline{x}}I_{\overline{y}}}}\ar[r]^{} & 0
\\
 \K_1(I_z)\ar[r]_{\delta_{I_zI_{y'}}} & \K_1(I_{y'})\ar[r]_{\pi^*_{I_{y'}}} &  \K_1(I_{\overline{y}})\ar[r]^{} & 0
} 
\]
Thus, we get on the one hand that $\K_1(I_{y'})/  \delta_{I_zI_{y'}}(\K_1(I_z))\simeq \K_1(I_{\overline{y}})$ and on the other hand $\pi^*_{I_{y'}}\circ\delta_{I_{x}I_{y'}}= \delta_{I_{\overline{x}}I_{\overline{y}}}\circ \pi^*_{I_x} $. Moreover, by hypothesis, we have $\delta_{I_{\overline{x}}I_{\overline{y}}}(\overline{k})=\overline{l}$. So one finally gets that $\delta_{I_{x}I_{y'}}(k)=\delta_{I_{y}I_{y'}}(l)+\delta_{I_zI_{y'}}(l')$ for some $l'\in \K_1(I_z)$. That is, there exists $(z,l')\in \Cu_1(I)$ such that $(x,k)\leq (y,l) + (z,l')$. This ends the proof of the equivalence.

Finally, we already know that $\Cu_1(I)$ is an ideal of $\Cu_1(A)$ and that $\pi^*:\Cu_1(A)\twoheadrightarrow \Cu_1(A/I)$ is constant on classes of $\Cu_1(A)/\Cu_1(I)$. By \autoref{prop:factorquotient}, $\pi^*$ induces a surjective $\Cu^\sim$-morphism $\overline{\pi^*}:\Cu_1(A)/\Cu_1(I)\longrightarrow \Cu_1(A/I)$. Furthermore, the equivalence that we have just proved states that $\overline{\pi^*}$ is also an order-embedding. Thus we get a $\Cu^\sim$-isomorphism  $\Cu_1(A)/\Cu_1(I)\simeq \Cu_1(A/I)$. 
\end{proof}

\subsection{Exact sequences} We study the notion of exactness in the non-abelian category $\Cu^\sim$. From this, we show that $\Cu_1$ preserves short exact sequences of ideals, and we exhibit a split-exact sequence in $\Cu^\sim$ that links a positively ordered $\Cu^\sim$-semigroup that has maximal elements with its positive cone and its maximal elements. 

\begin{dfn}
\label{dfn:exactseq}
Let $S,T,V$ be positively directed $\Cu^\sim$-semigroups. Let $f:S\longrightarrow T$ be a $\Cu^\sim$-morphism. We define
$\im f:= \{ (t_1,t_2)\in T\times T: \exists s\in S, t_1\leq f(s)+t_2 \}$ and $\ker f:= \{ (s_1,s_2)\in S \times S: f(s_1)\leq f(s_2) \}$. 

Now consider $g:T\longrightarrow V$ a $\Cu^\sim$-morphism. We say that a sequence $... \longrightarrow S\overset{f}\longrightarrow T\overset{g}\longrightarrow V\longrightarrow ...$ is \emph{exact at $T$} if: $\ker g=\im f$. We say that it is \emph{short-exact} if $0 \longrightarrow S\overset{f}\longrightarrow T\overset{g}\longrightarrow V\longrightarrow 0 $ is exact everywhere. Finally, we say that a short-exact sequence is \emph{split}, if there exists a $\Cu^\sim$-morphism $q:V\longrightarrow S$ such that $g\circ q=id_V$.
\end{dfn}

\begin{prop}
\label{prop:blabla}
Let $S\overset{f}\longrightarrow T\overset{g}\longrightarrow V$ be a sequence in $\Cu^\sim$ as in \autoref{dfn:exactseq}. Then: 

(i) $f$ is an order-embedding if and only if   $0\longrightarrow S\overset{f}\longrightarrow T$ is exact. 

(ii) If $g$ is surjective then $T\overset{g}\longrightarrow V\longrightarrow 0$ is exact. If moreover $g(T)\in\Lat(V)$, then the converse is true.
\end{prop}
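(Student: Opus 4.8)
The plan is to prove both parts by directly unwinding \autoref{dfn:exactseq}, since the only maps flanking the sequence are the zero maps into $S$ and out of $V$, whose images and kernels can be computed explicitly. Writing $\iota\colon 0\to S$ and $\pi\colon V\to 0$ for these, I would first record that $\im\iota=\{(s_1,s_2)\in S\times S : s_1\leq s_2\}$ (because $\iota$ sends the unique element to $0_S$, so the defining condition $s_1\leq\iota(z)+s_2$ becomes $s_1\leq 0_S+s_2=s_2$), and dually that $\ker\pi=\{(v_1,v_2): 0\leq 0\}=V\times V$ (because $\pi$ collapses everything to $0$). These two computations carry essentially all the content.

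For (i), exactness at $S$ of $0\to S\overset{f}\longrightarrow T$ reads $\ker f=\im\iota$, that is, $\{(s_1,s_2): f(s_1)\leq f(s_2)\}=\{(s_1,s_2): s_1\leq s_2\}$. The equality of these two subsets of $S\times S$ is, verbatim, the assertion that $f(s_1)\leq f(s_2)$ holds if and only if $s_1\leq s_2$, which is the definition of an order-embedding. So (i) follows with no further work.

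For (ii), exactness at $V$ of $T\overset{g}\longrightarrow V\to 0$ reads $\im g=\ker\pi=V\times V$, i.e.\ for every $v_1,v_2\in V$ there is $t\in T$ with $v_1\leq g(t)+v_2$. For the forward implication I would use that $V$ is positively directed: given $v_1,v_2$, pick $p\in V$ with $v_2+p\geq 0$, then by surjectivity of $g$ choose $t$ with $g(t)=v_1+p$, so that $g(t)+v_2=v_1+(v_2+p)\geq v_1$ by translation-invariance of the order. Since $v_1,v_2$ were arbitrary, $\im g=V\times V$ and the sequence is exact at $V$. For the converse, assume exactness together with the hypothesis that $g(T)$ is an ideal of $V$. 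Taking $v_2=0$ in the exactness condition yields, for each $v_1\in V$, some $t$ with $v_1\leq g(t)\in g(T)$; since an ideal is in particular a lower set (being Scott-closed, cf.\ \autoref{dfn:scottopen} and \autoref{dfn:orderideal}), this forces $v_1\in g(T)$. Hence $g(T)=V$ and $g$ is surjective.

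The argument is almost entirely formal, so I do not expect a serious obstacle; the one place where a genuine property of the semigroups intervenes is the forward direction of (ii), where positive directedness is needed to upgrade the inequality $v_1\leq g(t)+v_2$ (which would be immediate for positively ordered monoids, taking $g(t)=v_1$) into something provable when $v_2$ need not be positive. The converse relies only on the lower-set property of ideals, so I would take care to invoke that explicitly rather than the full positively-stable/Scott-closed package.
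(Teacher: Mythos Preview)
Your proposal is correct and follows essentially the same approach as the paper: compute $\im\iota$ and $\ker\pi$ explicitly, read off (i) as a tautology, use positive directedness for the forward direction of (ii), and the lower-set property of ideals for the converse. The only variation is that in the forward direction of (ii) you invoke (PD) directly (choosing any $p$ with $v_2+p\geq 0$), whereas the paper appeals to \autoref{lma:PCequi} to pick the unique $v\in V_{max}$ with $v_2+v=e_{V_{max}}$; your version is slightly more elementary but otherwise the arguments coincide.
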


\begin{proof}
We recall that for $0\overset{0}\longrightarrow S$,  $\im0= \{(s_1,s_2)\in S^2 \mid s_1\leq s_2\}$ and that for $T\overset{0}\longrightarrow 0$, $\ker0= T^2$. Let us consider a sequence $S\overset{f}\longrightarrow T\overset{g}\longrightarrow V$ in $\Cu^\sim$.

(i) $f$ is an order-embedding if and only if $[s_1\leq s_2 \Leftrightarrow f(s_1)\leq f(s_2)]$, that is, if and only if $\im 0=\ker f$.

(ii) Suppose $g$ is surjective and let $v_1,v_2$ be elements in $V$. Since $V$ is positively directed we know that there exists an element $p_{v2}$ of $V$ such that $0\leq v_2+p_v$. Thus, we have $v_1\leq v_2+p_v+v_1$. By surjectivity, there exists $t\in T$ such that $g(t)=p_v+v_1$. Hence, for any $v_1,v_2$ in $V$ there exists $t\in T$ such that $v_1\leq g(t)+v_2$, that is, $\ker0=V^2=\im g$.

Suppose now that $T\overset{g}\longrightarrow V\longrightarrow 0$ is exact and that $g(T)$ is an ideal of $V$. We know that for any $v_1,v_2$, there exists $t\in T$ such that $v_1\leq g(t)+v_2$. In particular for $v_2=0$, we get that for any $v\in V$, there exists $t\in T$ such that $v\leq g(t)$. Moreover $g(T)$ is order-hereditary and thus $v\in g(T)$, which ends the proof. 
\end{proof}

\begin{lma}
\label{lma:bloblo}
Let $S\overset{f}\longrightarrow T\overset{g}\longrightarrow V$ be a sequence in $\Cu^\sim$. Assume that $f(S)$ is an ideal of $T$ such that $f(S)\subseteq g^{-1}(\{0_V\})$. By \autoref{prop:factorquotient}, we can consider $\overline{g}: T/f(S)\longrightarrow V $.
If $\overline{g}$ is a $\Cu^\sim$-isomorphism, then $S\overset{f}\longrightarrow T\overset{g}\longrightarrow V\longrightarrow 0$ is exact. If moreover $g(T)$ is an ideal of $V$, then the converse is true. 
\end{lma}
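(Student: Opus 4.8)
The plan is to translate both exactness conditions into structural properties of $\overline{g}$ by unwinding the definition of the quotient order on $T/f(S)$. The central observation is that, since $f(S)$ is an ideal of $T$ and $f(S)$ is exactly the set of elements $f(s)$, one has for all $t_1,t_2\in T$ the equivalences $(t_1,t_2)\in\im f\iff \exists s\in S,\ t_1\leq f(s)+t_2\iff t_1\leq_{f(S)} t_2\iff \overline{t_1}\leq\overline{t_2}$ in $T/f(S)$, the middle step being the definition of the quotient preorder from \autoref{lma:quotientideal}. On the other side, writing $g=\overline{g}\circ\pi$ with $\pi\colon T\to T/f(S)$ the quotient map, one has $(t_1,t_2)\in\ker g\iff g(t_1)\leq g(t_2)\iff \overline{g}(\overline{t_1})\leq\overline{g}(\overline{t_2})$. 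Consequently, the identity $\ker g=\im f$, i.e.\ exactness at $T$, is \emph{equivalent} to the statement that $\overline{g}$ is an order-embedding. This is the conceptual core of the argument and, I expect, the main point to get right; everything else is bookkeeping against \autoref{prop:blabla}.

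Next I would record that surjectivity of $g$ is equivalent to surjectivity of $\overline{g}$, since $\pi$ is surjective and $g=\overline{g}\circ\pi$. Combined with \autoref{prop:blabla}(ii), this yields: if $\overline{g}$ is surjective then $T\overset{g}\longrightarrow V\longrightarrow 0$ is exact; and, \emph{under the additional hypothesis that $g(T)$ is an ideal of $V$}, exactness of $T\overset{g}\longrightarrow V\longrightarrow 0$ forces $g$, hence $\overline{g}$, to be surjective.

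For the forward implication, assume $\overline{g}$ is a $\Cu^\sim$-isomorphism. Then $\overline{g}$ is in particular an order-embedding (both $\overline{g}$ and $\overline{g}^{-1}$ are order-preserving), so by the central observation $\ker g=\im f$, giving exactness at $T$; and $\overline{g}$ is surjective, so $g$ is surjective and $T\overset{g}\longrightarrow V\longrightarrow 0$ is exact by \autoref{prop:blabla}(ii). Note this direction does not use the hypothesis that $g(T)$ is an ideal.

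For the converse, assume the sequence is exact and $g(T)$ is an ideal of $V$. Exactness at $T$ and the central observation give that $\overline{g}$ is an order-embedding, hence injective; exactness of $T\overset{g}\longrightarrow V\longrightarrow 0$ together with $g(T)$ an ideal forces $g$, and therefore $\overline{g}$, surjective. Thus $\overline{g}$ is a bijective order-embedding that is already a $\Cu^\sim$-morphism by \autoref{prop:factorquotient}, and it remains only to check that $\overline{g}^{-1}$ is again a $\Cu^\sim$-morphism. Since $\overline{g}$ is a bijective order-embedding, $\overline{g}^{-1}$ is automatically order-preserving; and because suprema of increasing sequences and the relation $\ll$ are defined purely in terms of the order, a routine verification—using that $\overline{g}$ preserves suprema and the compact-containment relation and that the domain satisfies (O1)—shows $\overline{g}^{-1}$ preserves both. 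Hence $\overline{g}$ is a $\Cu^\sim$-isomorphism, completing the proof. The only genuine obstacle beyond the core observation is this last standard fact that a bijective $\Cu^\sim$-order-embedding inverts to a $\Cu^\sim$-morphism.
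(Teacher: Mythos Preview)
Your proof is correct and, for the forward implication, follows the same path as the paper: the paper unwinds the chain of equivalences $(t_1,t_2)\in\ker g \iff g(t_1)\leq g(t_2) \iff \overline{g}(\overline{t_1})\leq\overline{g}(\overline{t_2}) \iff \overline{t_1}\leq\overline{t_2} \iff (t_1,t_2)\in\im f$, combined with \autoref{prop:blabla}(ii) for exactness at $V$, exactly as you do.

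You in fact go further than the paper: the paper proves only the forward direction and leaves the converse (under the hypothesis that $g(T)$ is an ideal) unargued. Your treatment of the converse is sound---exactness at $T$ gives the order-embedding property via the same equivalences, \autoref{prop:blabla}(ii) gives surjectivity, and the passage from ``bijective $\Cu^\sim$-morphism that is an order-embedding'' to ``$\Cu^\sim$-isomorphism'' is indeed routine since both suprema and $\ll$ are determined by the order, so an order-isomorphism automatically preserves them in both directions.
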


\begin{proof}
Suppose $T/f(S)\overset{\overline{g}}\simeq V$. Since $\overline{g}$ is an isomorphism, we know that  $g$ is surjective. Thus, by \autoref{prop:blabla}, we get exactness at $V$. Let us show exactness at $T$. We have the following equivalences: 

$(t_1,t_2)\in \ker g$ if and only if $g(t_1)\leq g(t_2)$ -by definition- if and only if $g(\overline{t_1})\leq g(\overline{t_2}) $ -since g is constant on classes of $T/f(S)$- if and only if $\overline{t_1}\leq \overline{t_2}$ -since $\overline{g}$ is an order-embedding- if and only if $t_1\leq f(s)+t_2$ for some $s\in S$ -by definition-, that is, if and only if $(t_1,t_2)\in \im f$.
\end{proof}

\begin{thm}
\label{thm:exactseqideal}
Let $A$ be a $\CatCa$-algebra of stable rank one such that $\Lat_f(A)=\{\sigma\text{-unital ideals of }A\}$. \\Let $I\in\Lat(A)$. Consider the canonical short exact sequence: $0 \longrightarrow I\overset{i}\longrightarrow A\overset{\pi}\longrightarrow A/I\longrightarrow 0 $. Then, the following sequence is short exact in $\Cu^\sim$:
\[
\xymatrix{
0\ar[r]^{} & \Cu_1(I)\ar[r]^{i^*} & \Cu_1(A)\ar[r]^{\pi^*} & \Cu_1(A/I)\ar[r]^{} & 0
} 
\]
\end{thm}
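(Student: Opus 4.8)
The plan is to establish exactness of the displayed sequence at each of its three nonzero terms separately, recycling the results built up in this section. Exactness at $\Cu_1(A/I)$ and at $\Cu_1(A)$ will follow almost formally from the quotient isomorphism of \autoref{thm:isoquotient}, whereas exactness at $\Cu_1(I)$ — the assertion that $i^*$ is an order-embedding — is the one point requiring genuine input about the structure of $\Cu_1$ on an ideal.

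I would begin with the left end. By \autoref{prop:blabla}(i), exactness of $0\to\Cu_1(I)\xrightarrow{i^*}\Cu_1(A)$ is equivalent to $i^*$ being an order-embedding, and I would verify this in the alternative picture of \autoref{prg:newpicture}. Writing $\Cu_1(A)=\bigsqcup_{J\in\Lat(A)}\Cu_f(J)\times\K_1(J)$ and likewise $\Cu_1(I)=\bigsqcup_{J\in\Lat(I)}\Cu_f(J)\times\K_1(J)$, the canonical inclusion $\Lat(I)\hookrightarrow\Lat(A)$ identifies $i^*$ with the inclusion of the corresponding sub-disjoint-union. Since the order on a $\Cu_1$-semigroup is governed by comparison in $\Cu$ together with compatibility of the $\K_1$-classes under the connecting maps $\delta_{JJ'}$, and since for ideals $J\subseteq J'\subseteq I$ both the Cuntz comparison and the maps $\delta_{JJ'}$ are computed identically inside $I$ and inside $A$, the order and addition induced from $\Cu_1(A)$ on this sub-disjoint-union agree with the intrinsic ones of $\Cu_1(I)$. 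The only external ingredient is that $\Cu(i)\colon\Cu(I)\to\Cu(A)$ is an order-embedding for an ideal $I$ (standard; this underlies the lattice isomorphism recalled in \autoref{prg:latticecu}); granting it, $i^*$ reflects order on positive cones and hence everywhere, so it is an order-embedding with image the ideal $\Cu_1(I)\in\Lat(\Cu_1(A))$ of \autoref{cor:simpleideal}.

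For the remaining two terms I would feed the formal data into \autoref{lma:bloblo}. Functoriality of $\Cu_1$ applied to $\pi\circ i=0$ gives $\pi^*\circ i^*=0$, so $i^*(\Cu_1(I))\subseteq (\pi^*)^{-1}(\{0\})$; by the previous paragraph this image is the ideal $\Cu_1(I)$, and by \autoref{thm:isoquotient} the induced map $\overline{\pi^*}\colon\Cu_1(A)/\Cu_1(I)\to\Cu_1(A/I)$ is a $\Cu^\sim$-isomorphism. These are exactly the hypotheses of \autoref{lma:bloblo} for $\Cu_1(I)\xrightarrow{i^*}\Cu_1(A)\xrightarrow{\pi^*}\Cu_1(A/I)$; its forward implication then yields exactness simultaneously at $\Cu_1(A)$ and at $\Cu_1(A/I)$. (Exactness at the latter can also be read off directly from \autoref{prop:blabla}(ii), since $\overline{\pi^*}$ being an isomorphism forces $\pi^*$ to be surjective.) Assembling the three statements shows the sequence is short exact in $\Cu^\sim$.

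I expect the main obstacle to sit entirely in the second paragraph, in the order-reflection of $i^*$ on the $\K_1$-data: one must check that a pair $(x,k),(y,l)\in\Cu_1(I)$ with $i^*(x,k)\le i^*(y,l)$ already satisfies the defining order relation inside $\Cu_1(I)$, which reduces to the naturality of the connecting maps $\delta_{JJ'}$ under the ideal inclusion $I\hookrightarrow A$. This is the only place where the non-positively-ordered $\Cu^\sim$-structure, rather than its $\Cu$- and $\K_1$-pieces taken separately, has to be controlled.
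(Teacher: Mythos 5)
Your proposal is correct and follows essentially the same route as the paper: exactness at $\Cu_1(I)$ via the order-embedding criterion of \autoref{prop:blabla}(i), and exactness at $\Cu_1(A)$ and $\Cu_1(A/I)$ by feeding the quotient isomorphism of \autoref{thm:isoquotient} into \autoref{lma:bloblo}. The only difference is that where the paper simply cites \autoref{cor:simpleideal} for $\Cu_1(I)$ being an ideal of $\Cu_1(A)$ with $i^*$ an order-embedding, you spell out that verification in the picture of \autoref{prg:newpicture} — a detail the paper's corollary leaves as ``easily checked,'' so this is elaboration rather than a different argument.
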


\begin{proof}
We know that $\Cu_1(I)$ is an ideal of $\Cu_1(A)$ and that $i^*$ is an order-embedding. Hence by \autoref{prop:blabla} (i), the sequence is exact at $\Cu_1(I)$. From \autoref{thm:isoquotient}, we also know that $\pi^*$ is constant on classes of $\Cu_1(A)/\Cu_1(I) $ and that $\overline{\pi^*}: \Cu_1(A)/\Cu_1(I)\simeq \Cu_1(A/I)$ is an isomorphism. Thus using \autoref{lma:bloblo} the result follows.
\end{proof}

\begin{cor}
\label{cor:exactunitiz}
Let $A$ be a $\CatCa$-algebra of stable rank one such that $\Lat_f(A)=\{\sigma\text{-unital ideals of }A\}$. Consider the canonical exact sequence $0 \longrightarrow A\overset{i}\longrightarrow A^\sim\overset{\pi}\longrightarrow A^\sim/A\simeq\mathbb{C}\longrightarrow 0$. Then there is a short exact sequence:
\vspace{-0,25cm}\[
\xymatrix{
0\ar[r]^{} & \Cu_1(A)\ar[r]^{i^*} & \Cu_1(A^\sim)\ar[r]^{\pi^*} & \overline{\N}\times\{0\}\ar[r]^{} & 0
} 
\]
\end{cor}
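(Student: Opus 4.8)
The plan is to read this off directly from \autoref{thm:exactseqideal}. The canonical unitization exhibits $A$ as a closed two-sided ideal of $A^\sim$ with quotient $A^\sim/A\simeq\mathbb{C}$, so the assertion is precisely \autoref{thm:exactseqideal} applied to the pair $(A^\sim,A)$, provided one first checks that $A^\sim$ lies in $\CatCa$. Separability of $A^\sim$ is immediate. For stable rank one I would invoke that this property passes to the unitization: if $A$ is non-unital this is essentially the definition, since $\sr(A):=\sr(A^\sim)$, and if $A$ is already unital then $A^\sim\cong A\oplus\mathbb{C}$, which still has stable rank one. Hence $A^\sim\in\CatCa$ and $A\in\Lat(A^\sim)$.

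With these hypotheses in hand, \autoref{thm:exactseqideal} applied to $0\longrightarrow A\overset{i}\longrightarrow A^\sim\overset{\pi}\longrightarrow \mathbb{C}\longrightarrow 0$ yields the short exact sequence $0\longrightarrow\Cu_1(A)\overset{i^*}\longrightarrow\Cu_1(A^\sim)\overset{\pi^*}\longrightarrow\Cu_1(\mathbb{C})\longrightarrow 0$ in $\Cu^\sim$, where $\pi^*=\Cu_1(\pi)$. It then remains only to identify $\Cu_1(\mathbb{C})$ with $\overline{\N}\times\{0\}$.

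For this last step I would unwind the picture of \autoref{prg:newpicture}. Since $\Lat(\mathbb{C})=\{0,\mathbb{C}\}$, one has $\Cu_1(\mathbb{C})=\big(\Cu_f(0)\times\K_1(0)\big)\sqcup\big(\Cu_f(\mathbb{C})\times\K_1(\mathbb{C})\big)$. As $\Cu(\mathbb{C})=\overline{\N}$, the full elements are $\Cu_f(0)=\{0\}$ and $\Cu_f(\mathbb{C})=\overline{\N}\setminus\{0\}$, while $\K_1(0)=\K_1(\mathbb{C})=0$; assembling these pieces with the order and addition of \autoref{prg:newpicture} recovers exactly $\overline{\N}$ living in the trivial $\K_1$-component, i.e. $\Cu_1(\mathbb{C})\simeq\overline{\N}\times\{0\}$. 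Equivalently, by \autoref{thm:naturaltransfolma} the vanishing $\Cu_1(\mathbb{C})_{max}\simeq\K_1(\mathbb{C})=0$ forces $\Cu_1(\mathbb{C})$ to be positively ordered and to coincide with its positive cone $\Cu_1(\mathbb{C})_+\simeq\Cu(\mathbb{C})=\overline{\N}$. Substituting this identification into the sequence above gives the stated form, with $\pi^*$ induced by $\pi$.

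There is no deep obstacle here: the argument is a bookkeeping corollary. The only point demanding any care is confirming that the functor $\Cu_1$ is defined on $A^\sim$, that is, $A^\sim\in\CatCa$, which reduces to the stability of stable rank one under unitization; the computation of $\Cu_1(\mathbb{C})$ is then a routine specialization of the definitions.
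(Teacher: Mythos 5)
Your proposal is correct and is exactly the argument the paper intends: the corollary is stated without proof as an immediate consequence of \autoref{thm:exactseqideal} applied to the ideal $A\subseteq A^\sim$, and you have simply filled in the routine checks (stable rank one passes to unitizations, so $A^\sim\in\CatCa$, and $\Cu_1(\mathbb{C})\simeq\overline{\N}\times\{0\}$ via \autoref{prg:newpicture} since $\mathbb{C}$ is simple with trivial $\K_1$). No further comment is needed.
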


Now that we have numerous tools regarding ideals and exact sequences in $\Cu^\sim$, we will relate ideals, maximal elements, and positive cones through  exact sequences. Recall that for any positively directed $\Cu^\sim$-semigroup $S$ that has maximal elements, we have that $S_+\in \Cu$ and that $S_{max}\in\AbGp$; see \autoref{prop:PCABGP}.

Also, a $\Cu$-semigroup (respectively a $\Cu$-morphism) can be trivially seen as a $\Cu^\sim$-semigroup since $\Cu\subseteq \Cu^\sim$. The same can be done for an abelian group (respectively an $\AbGp$-morphism), -a fortiori, for the abelian group $S_{max}$ and the $\AbGp$-morphism $\alpha_{max}$- : Given $G\in\AbGp$, define $g_1\leq g_2$ if and only if $g_1=g_2$. From this, it follows that also $g_1\ll g_2$ if and only if $g_1=g_2$. This defines a functor $\AbGp\longrightarrow \Cu^\sim$ which allows us to see the category $\AbGp$ as a subcategory of $\Cu$.

Therefore, in what follows, we consider $\nu_+$ and $\nu_{max}$ as functors with codomain $\Cu^\sim$.
Finally, note that all of the proofs will be done in an abstract setting. Further, by \autoref{thm:naturaltransfolma}, we will be able to directly apply those results to $\Cu(A)$ and $\K_1(A)$, also seen as $\Cu^\sim$-semigroups.

\begin{dfn}
\label{dfn:vch}
Let $S$ be a positively directed $\Cu^\sim$-semigroup that has maximal elements. Let us define two $\Cu^\sim$-morphisms that link $S$ to $S_+$ on the one hand and to $S_{max}$ on the other hand, as follows:
\[
	\begin{array}{ll}
		i: S_+\overset{\subseteq}\hooklongrightarrow S \hspace{3cm} j:S \twoheadrightarrow S_{max}\\
		\hspace{0,7cm}s\longmapsto s \hspace{3,5cm}s\longmapsto s+e_{S_{max}}
	\end{array}
\]
\end{dfn}

\begin{thm}
\label{prop:vch}
Let $S$ be a positively directed $\Cu^\sim$-semigroup that has maximal elements. Consider the $\Cu^\sim$-morphisms defined in \autoref{dfn:vch}. Then $i$ is an order-embedding and $j$ is surjective. Moreover, the following sequence in $\Cu^\sim$ is split-exact: 
\vspace{-0cm}\[
\xymatrix{
0\ar[r]^{} & S_+\ar[r]^{i} & S\ar[r]^{j} & S_{max}\ar@{.>}@/_{-1pc}/[l]^{q}\ar[r]^{} & 0
} 
\vspace{-0,32cm}\]
where the split morphism is defined by $q(s):=s$.
\end{thm}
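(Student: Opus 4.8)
The plan is to verify the three assertions—that $i$ is an order-embedding, that $j$ is surjective, and that the displayed sequence is split-exact—separately, reducing the exactness statements at the two outer nodes to \autoref{prop:blabla} and reserving a direct computation for the middle node. The guiding principle throughout is that $S_{max}$ is an abelian group (\autoref{prop:PCABGP}, \autoref{lma:PCequi}) whose neutral element $e_{S_{max}}$ is the unique positive maximal element, and that the order induced on $S_{max}$ from $S$ is trivial, since $m_1\leq m_2$ with both maximal forces $m_1=m_2$.

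First I would dispose of the routine points. The map $i$ is the inclusion of $S_+$ carrying its induced order, so it is trivially an order-embedding; by \autoref{prop:blabla}(i) this gives exactness of $0\to S_+\overset{i}\to S$. For $j$, the key observation is that $j(m)=m+e_{S_{max}}=m$ for every $m\in S_{max}$, because $e_{S_{max}}$ is neutral in the group $S_{max}$; hence $j$ is surjective and, simultaneously, the inclusion $q\colon S_{max}\hookrightarrow S$ satisfies $j\circ q=\id_{S_{max}}$, which provides the splitting. Surjectivity of $j$ together with \autoref{prop:blabla}(ii) yields exactness of $S\overset{j}\to S_{max}\to 0$. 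Along the way I would record that $j$ is a genuine $\Cu^\sim$-morphism—additive because $e_{S_{max}}+e_{S_{max}}=e_{S_{max}}$ in the group $S_{max}$, and order- and supremum-preserving because its image lands where the order is trivial—and that $q$ is a $\Cu^\sim$-morphism as the inclusion of the discrete object $S_{max}$; these structural checks are routine and done node by node.

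The heart of the argument is exactness at $S$, i.e. $\ker j=\im i$, and this is where the effort goes. For $\im i\subseteq\ker j$, suppose $s_1\leq p+s_2$ with $p\in S_+$. The crucial fact is that $p\geq 0$ forces $p+e_{S_{max}}=e_{S_{max}}$: indeed $p+e_{S_{max}}\geq e_{S_{max}}$ and $e_{S_{max}}$ is maximal, so equality holds. Applying $j$ and using additivity gives $j(s_1)\leq j(p)+j(s_2)=e_{S_{max}}+j(s_2)=j(s_2)$, so $(s_1,s_2)\in\ker j$. For the reverse inclusion, if $(s_1,s_2)\in\ker j$ then $s_1+e_{S_{max}}=s_2+e_{S_{max}}$ (again by triviality of the order on $S_{max}$); since $e_{S_{max}}\geq 0$ one has $s_1\leq s_1+e_{S_{max}}=e_{S_{max}}+s_2$, and as $e_{S_{max}}\in S_+$ this exhibits $(s_1,s_2)\in\im i$ with witness $p=e_{S_{max}}$.

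The main obstacle is conceptual rather than computational: it lies in correctly exploiting the group structure of $S_{max}$—specifically the two facts $p+e_{S_{max}}=e_{S_{max}}$ for positive $p$ (absorption plus maximality) and $e_{S_{max}}\geq 0$ (the Remark following \autoref{prop:PCABGP})—and in keeping straight that $\ker j$ really is the relation $s_1+e_{S_{max}}=s_2+e_{S_{max}}$ once the triviality of the order on $S_{max}$ is used. Once these are in hand both inclusions are short, and the split-exactness follows by assembling exactness at the three nodes with the section $q$.
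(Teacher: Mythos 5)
Your proposal is correct and follows essentially the same route as the paper: $i$ is handled trivially, exactness at $S_+$ and $S_{max}$ is reduced to \autoref{prop:blabla}, and exactness at $S$ is proved by the same two computations --- using that $p+e_{S_{max}}=e_{S_{max}}$ (equivalently $p\leq e_{S_{max}}$) for positive $p$ in one direction, and that $e_{S_{max}}\geq 0$ serves as the witness in $\im i$ in the other. Your explicit verification that $j\circ q=\id_{S_{max}}$ via $j(m)=m+e_{S_{max}}=m$ is the same observation the paper uses for surjectivity of $j$, just stated slightly more completely.
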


\begin{proof}
It is trivial to check that $i$ is a well-defined order-embedding $\Cu^\sim$-morphism.
We now need to check whether $j$ is a well-defined additive map. From \autoref{lma:PCequi}, we know that $s+e_{S_{max}}\in S_{max}$, for any $s\in S$. Also, because $2 e_{S_{max}}=e_{S_{max}}$, we get that $j$ is additive. Further, whenever $s\leq s'$, we know that  $s+e_{S_{max}}\leq s'+ e_{S_{max}}$. Since $s+e_{S_{max}} \in S_{max}$, we deduce that $j(s)=j(s')$ whenever $s\leq s'$. Further, $j(0)= e_{S_{max}}$. Thus, $j$ is a surjective $\Cu^\sim$-morphism.

By \autoref{prop:blabla}, we get exactness of the sequence at $S_+$ and $S_{max}$. Now let us check that the sequence is exact at $S$. Let $(s_1,s_2)\in \ker j$. Hence $j(s_1)=j(s_2)$, that is, $s_1+e_{S_{max}} = s_2+e_{S_{max}} $. Since $e_{S_{max}} \in S_+$, we easily get that $s_1\leq s_1+e_{S_{max}} = s_2+e_{S_{max}} $, which proves that $\ker j\subseteq \im i$. Conversely, let $(s_1,s_2)\in \im j$. Then we know that there exists a positive element $s\in S_+$ such that $s_1\leq s+s_2$. Since $e_{S_{max}}$ is the maximal positive element of $S$, we can take $s=e_{S_{max}}$. Then we easily get that $j(s_1)\leq j(s_2)$ -in fact, they are equal-. Thus we conclude that $\im i = \ker j$, which ends the proof.
\end{proof}

Note that we could not have used \autoref{lma:bloblo} here, since $S_+$ is not a $\Cu^\sim$ ideal of $S$. Indeed the smallest ideal containing $S_+$ is $S$ itself. We now give a functorial version of the latter split-exact sequence and also a likewise analogue for ideals.

\begin{cor}
\label{thm:chasingCuK1}
Let $S,T$ be positively directed $\Cu^\sim$-semigroups that have maximal elements. Let $\alpha:S\longrightarrow T$ be a $\Cu^\sim$ morphism. Viewing the functors $\nu_+$ and $\nu_{max}$ with codomain $\Cu^\sim$, we obtain the following commutative diagram with exact rows in $\Cu^\sim$:
\[
\xymatrix{
0\ar[r]^{} & S_+\ar[d]_{\alpha_+}\ar[r]^{i} & S \ar[d] _{\alpha}\ar[r]^{j} & S_{max} \ar[d]^{\alpha_{max}}\ar[r]^{} & 0
\\
0\ar[r]^{} & T_+\ar[r]_{i} & T\ar[r]_{j} & T_{max}\ar[r]^{} & 0
} 
\]
Furthermore, if $\alpha$ is a $\Cu^\sim$-isomorphism, then $\alpha_+$ is a $\Cu$-isomorphism and $\alpha_{max}$ is an abelian group isomorphism.
\end{cor}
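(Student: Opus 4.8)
The plan is to derive the whole statement from the abstract split-exact sequence of \autoref{prop:vch}, applied once to $S$ and once to $T$, so that exactness of the two rows is immediate; the only genuine work lies in checking that the two squares commute and in the final isomorphism claim. I would begin by recording that $\alpha_+:=\nu_+(\alpha)$ is well-defined: since $\alpha$ is a $\Cu^\sim$-morphism it is order-preserving and sends $0$ to $0$, whence $\alpha(S_+)\subseteq T_+$, and $\alpha_+$ is simply $\alpha$ with domain and codomain restricted to the positive cones. Commutativity of the left square is then automatic, because along both paths an element $s\in S_+$ is sent to $\alpha(s)$, the horizontal maps being the canonical inclusions of \autoref{dfn:vch}.

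The right square is where care is needed, since both $j$ and $\alpha_{max}$ are ``twisted'' by the neutral element of the maximal group. I would fix $s\in S$ and compute both composites using $j(s)=s+e_{S_{max}}$ together with the formula $\alpha_{max}=\alpha+e_{T_{max}}$ of \autoref{dfn:Sinfinite}. On one side this gives $\alpha_{max}(j(s))=\alpha(s+e_{S_{max}})+e_{T_{max}}=\alpha(s)+\alpha(e_{S_{max}})+e_{T_{max}}$, while on the other side $j(\alpha(s))=\alpha(s)+e_{T_{max}}$. These agree precisely because $\alpha_{max}$ is a group homomorphism $S_{max}\to T_{max}$ and hence carries the identity to the identity, i.e. $\alpha(e_{S_{max}})+e_{T_{max}}=\alpha_{max}(e_{S_{max}})=e_{T_{max}}$; substituting this into the first expression collapses it to $\alpha(s)+e_{T_{max}}=j(\alpha(s))$, so $\alpha_{max}\circ j=j\circ\alpha$. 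I expect this bookkeeping to be the main—indeed the only delicate—step.

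For the final isomorphism statement I would invoke functoriality of $\nu_+$ and $\nu_{max}$ (the latter being established in \autoref{dfn:Sinfinite}, and both appearing as functors in \autoref{thm:naturaltransfolma}). A functor carries isomorphisms to isomorphisms, so if $\alpha$ is a $\Cu^\sim$-isomorphism then $\alpha_+=\nu_+(\alpha)$ and $\alpha_{max}=\nu_{max}(\alpha)$ are invertible in their respective categories, with inverses $\nu_+(\alpha^{-1})$ and $\nu_{max}(\alpha^{-1})$. Since $\nu_+$ lands in $\Cu$ and $\nu_{max}$ lands in $\AbGp$, this yields that $\alpha_+$ is a $\Cu$-isomorphism and $\alpha_{max}$ is an abelian group isomorphism, completing the argument. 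No structural input beyond \autoref{prop:vch} and the functoriality of the two constructions is required.
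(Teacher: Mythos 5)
Your proposal is correct and follows essentially the same route as the paper: exactness of both rows from \autoref{prop:vch}, the left square by noting $\alpha_+=\alpha_{|S_+}$, the right square by the computation $\alpha_{max}(j(s))=\alpha(s)+\alpha(e_{S_{max}})+e_{T_{max}}=\alpha(s)+e_{T_{max}}=j(\alpha(s))$, and the final claim by functoriality of $\nu_+$ and $\nu_{max}$ with inverses $(\alpha^{-1})_+$ and $(\alpha^{-1})_{max}$. The only cosmetic difference is in justifying $\alpha(e_{S_{max}})+e_{T_{max}}=e_{T_{max}}$: the paper passes through $2e_{T_{max}}=e_{T_{max}}$, while you invoke identity-preservation of the group homomorphism $\alpha_{max}$ from \autoref{dfn:Sinfinite}; both are immediate.
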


\begin{proof}
We know from \autoref{prop:vch} that the row sequences are split-exact. Besides $\alpha_+=\alpha_{|S_+}$ hence the left square is commutative. Now take any $s\in S$. we have $\alpha_{max}\circ j_S(s) = \alpha_{max}(s+e_{S_{max}})= \alpha(s)+2e_{T_{max}}= \alpha(s)+e_{T_{max}}= j_T\circ\alpha(s)$, which proves that the right square is commutative.

Assume that $\alpha$ is an isomorphism. By functoriality, we obtain that $\alpha_+$ is a $\Cu$-isomorphism whose inverse is $(\alpha^{-1})_+$ and that $\alpha_{max}$ is an abelian group isomorphism whose inverse is $(\alpha^{-1})_{max}$.
\end{proof}

\begin{cor}
\label{cor:chasingCuK1ideal}
Let $S,T$ and $\alpha$ be as in \autoref{thm:chasingCuK1}. Assume also that $S,T$ are positively convex. Let $I$ be a singly-generated ideal of $S$ and $J:=I_{\alpha(e_{I_{max}})}$, the smallest (singly-generated) ideal of $T$ containing $\alpha(I)$ (see \autoref{lma:commutsquareideal}). We obtain the following commutative diagram with exact rows:
\[
\xymatrix{
0\ar[r]^{} & I_+\ar[d]_{({\alpha_{|I})}_+}\ar[r]^{i} & I \ar[d]_{{\alpha_{|I}}}\ar[r]^{j} & I_{max} \ar[d]^{({\alpha_{|I})}_{max}}\ar[r]^{} & 0
\\
0\ar[r]^{} & J_+\ar[r]_{i} & J\ar[r]_{j} & J_{max}\ar[r]^{} & 0
} 
\]
Furthermore, if $\alpha$ is a $\Cu^\sim$-isomorphism, then $\alpha(I)=J$ and $\alpha_{|I}:I\longrightarrow J$ is a $\Cu^\sim$-isomorphism. A fortiori, we also have $(\alpha_{|I})_+:I_+\longrightarrow J_+$ is a $\Cu$-isomorphism and $\alpha_I: I_{max}\longrightarrow J_{max}$ is an abelian group isomorphism.
\end{cor}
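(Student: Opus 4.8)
The plan is to obtain both assertions by applying \autoref{thm:chasingCuK1} to the restricted morphism $\alpha_{|I}\colon I\longrightarrow J$. First I would note that, being ideals, $I$ and $J$ are themselves countably-based positively directed and positively convex $\Cu^\sim$-semigroups (\autoref{dfn:orderideal}), and that $\alpha_{|I}$ is a well-defined $\Cu^\sim$-morphism by \autoref{lma:commutsquareideal}. Plugging $\alpha_{|I}$ into \autoref{thm:chasingCuK1} immediately produces the commutative diagram with exact rows whose vertical maps are $(\alpha_{|I})_+$, $\alpha_{|I}$ and $(\alpha_{|I})_{max}$; this establishes the first assertion without further effort.

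Assume now that $\alpha$ is a $\Cu^\sim$-isomorphism; then $\alpha^{-1}$ is again a $\Cu^\sim$-isomorphism. The key point is the equality $\alpha(I)=J$, of which the inclusion $\alpha(I)\subseteq J$ is already contained in \autoref{lma:commutsquareideal}. For the reverse inclusion I would use the remark following \autoref{lma:commutsquareideal}, which gives $\alpha(e_{I_{max}})=e_{J_{max}}$, hence $\alpha^{-1}(e_{J_{max}})=e_{I_{max}}$. By \autoref{cor:comparisonemax0} we have $J=I_{e_{J_{max}}}$ and $I=I_{e_{I_{max}}}$, so for $t\in J$ there is $t'$ with $0\leq t+t'\leq e_{J_{max}}$; applying the order-preserving additive map $\alpha^{-1}$ (which fixes $0$) yields $0\leq\alpha^{-1}(t)+\alpha^{-1}(t')\leq e_{I_{max}}$, that is $\alpha^{-1}(t)\in I_{e_{I_{max}}}=I$, whence $t\in\alpha(I)$. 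This shows $J\subseteq\alpha(I)$ and therefore $\alpha(I)=J$.

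With $\alpha(I)=J$ established, $\alpha_{|I}\colon I\longrightarrow J$ is the restriction of the bijection $\alpha$ to $I$ with codomain precisely its image, hence a bijection, and its inverse is the corresponding restriction of $\alpha^{-1}$, which is again a $\Cu^\sim$-morphism; thus $\alpha_{|I}$ is a $\Cu^\sim$-isomorphism. The remaining statements, namely that $(\alpha_{|I})_+\colon I_+\longrightarrow J_+$ is a $\Cu$-isomorphism and $\alpha_I\colon I_{max}\longrightarrow J_{max}$ is an abelian group isomorphism, then follow from the \emph{furthermore} clause of \autoref{thm:chasingCuK1} applied to the isomorphism $\alpha_{|I}$.

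I expect the only genuine obstacle to be the equality $\alpha(I)=J$: one must make sure that $\alpha^{-1}$ respects positivity and carries the maximal positive element $e_{J_{max}}$ back to $e_{I_{max}}$, which is exactly what lets the explicit description of the generated ideals transport across $\alpha^{-1}$. An alternative, more conceptual route would be to check directly that the image $\alpha(I)$ is an ideal of $T$ (submonoid, lower set, closed under suprema, positively stable) by pulling elements back through $\alpha^{-1}$; here the positive-stability axiom, quantifying over all of $T$ and over the sets $P_t$, is the delicate one, and $\alpha(I)=J$ then follows from the minimality of $J$.
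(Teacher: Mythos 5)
Your proposal is correct and follows essentially the same route as the paper: both obtain the diagram by feeding $\alpha_{|I}$ into \autoref{thm:chasingCuK1}, and both reduce the \emph{furthermore} clause to the equality $\alpha(I)=J$ via $\alpha(e_{I_{max}})=e_{J_{max}}$ and hence $\alpha^{-1}(e_{J_{max}})=e_{I_{max}}$. The only cosmetic difference is that where you unwind the explicit description of $I_{e_{J_{max}}}$ by hand to transport elements of $J$ back into $I$, the paper simply cites \autoref{lma:commutsquareideal} applied to $\alpha^{-1}$ to conclude $\alpha^{-1}(J)\subseteq I$.
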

	
\begin{proof}
We only have to check that whenever $\alpha$ is an isomorphism, then $J=\alpha(I)$ and that $\alpha_{|I}:I\longrightarrow J$ defined as in  \autoref{lma:commutsquareideal} is an isomorphism. Then the conclusion will follow applying \autoref{thm:chasingCuK1} to $\alpha_{|I}$. Suppose that $\alpha$ is a $\Cu^\sim$-isomorphism. We know that $\alpha_{|I}:I\longrightarrow J$ sends any element $x\in I$ to $\alpha(x)\in J$. Since $\alpha$ is an order-embedding, so is $\alpha_{|I}$. By \autoref{lma:commutsquareideal}, we know that $\alpha(I)\subseteq J$ and that $\alpha(e_{I_{max}})=e_{J_{max}}$. Now since $\alpha$ is an isomorphism, we obtain that $\alpha^{-1}(e_{J_{max}})= e_{I_{max}} $. That is, by \autoref{lma:commutsquareideal}, $\alpha^{-1}(J)\subseteq I$. We deduce that $\alpha(I)=J$ and that $\alpha_{I}$ is a $\Cu^\sim$-isomorphism.
\end{proof}

We now transport the results obtained to concrete $\Cu^\sim$-semigroups, using \autoref{thm:naturaltransfolma}. 

\begin{thm}
\label{prg:transposetoalgebras}
Let $A,B$ be separable or simple $\sigma$-unital $\CatCa$-algebras of stable rank one. Let $\phi:A\longrightarrow B$ be a $^*$-homomorphism. Then the following diagram is commutative with exact rows:
\vspace{-0,21cm}\[
\xymatrix{
0\ar[r]^{} & \Cu(A)\ar[d]_{\Cu(\phi)}\ar[r]^{i} & \Cu_1(A) \ar[d] _{\Cu_1(\phi)}\ar[r]^{j} & \K_1(A) \ar[d]^{\K_1(\phi)}\ar[r]^{} & 0
\\
0\ar[r]^{} & \Cu(B)\ar[r]^{i} & \Cu_1(B)\ar[r]^{j} & \K_1(B)\ar[r]^{} & 0
} 
\vspace{-0,13cm}\]
Furthermore, if $\Cu_1(\phi)$ is a $\Cu^\sim$-isomorphism, then $\Cu(\phi)$ is a $\Cu$-isomorphism and $\K_1(\phi)$ is a $\AbGp$-isomorphism.\\

\vspace{-0,4cm}Let $I\in\Lat(A)$. Write $J:=\overline{B\phi(I)B}$, the smallest ideal of $B$ containing $\phi(I)$ and $\alpha:=\Cu_1(\phi)$. We denote $\alpha=(\alpha_0,\{\alpha_{I}\}_{I\in\Lat(A)})$ as in \autoref{prg:newpicture}. Then the following diagram is commutative with exact rows:
\vspace{-0,21cm}\[
\xymatrix{
0\ar[r]^{} & \Cu(I)\ar[d]_{{\alpha_0}_{|\Cu(I)}}\ar[r]^{i} & \Cu_1(I) \ar[d] _{\alpha_{|\Cu_1(I)}}\ar[r]^{j} & \K_1(I) \ar[d]^{\alpha_I}\ar[r]^{} & 0
\\
0\ar[r]^{} & \Cu(J)\ar[r]^{i} & \Cu_1(J)\ar[r]^{j} & \K_1(J)\ar[r]^{} & 0
} 
\vspace{-0,12cm}\]
Furthermore, if $\alpha$ is a $\Cu^\sim$-isomorphism, then $\alpha(\Cu_1(I))=\Cu_1(J)$ and $\alpha_{|\Cu_1(I)}:\Cu_1(I)\longrightarrow \Cu_1(J)$ is a $\Cu^\sim$-isomorphism. A fortiori, we also have ${\alpha_0}_{|\Cu(I)}:\Cu(I)\longrightarrow \Cu(J)$ is a $\Cu$-isomorphism and $\alpha_I: \K_1(I)\longrightarrow \K_1(J)$ is an $\AbGp$-isomorphism.
\end{thm}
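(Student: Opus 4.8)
The plan is to deduce both commutative diagrams directly from the abstract split-exact sequences established in \autoref{thm:chasingCuK1} and \autoref{cor:chasingCuK1ideal}, transported to the concrete setting via the natural isomorphisms $\nu_+\circ\Cu_1\simeq\Cu$ and $\nu_{max}\circ\Cu_1\simeq\K_1$ from \autoref{thm:naturaltransfolma}. Since $\Cu_1(A)$ and $\Cu_1(B)$ are countably-based, positively directed and positively convex $\Cu^\sim$-semigroups, and $\alpha:=\Cu_1(\phi)$ is a $\Cu^\sim$-morphism, everything needed to invoke the abstract results is in place; the content of the theorem is precisely this transposition.

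For the first diagram, I would apply \autoref{thm:chasingCuK1} to $S:=\Cu_1(A)$, $T:=\Cu_1(B)$ and $\alpha:=\Cu_1(\phi)$. This yields a commutative diagram with split-exact rows $0\to S_+\to S\to S_{max}\to 0$ and $0\to T_+\to T\to T_{max}\to 0$, with vertical maps $\alpha_+$, $\alpha$ and $\alpha_{max}$. The only remaining task is to rewrite the outer columns using \autoref{thm:naturaltransfolma}: under $\nu_+\circ\Cu_1\simeq\Cu$ the map $\alpha_+=\nu_+(\alpha)$ becomes $\Cu(\phi)$, and under $\nu_{max}\circ\Cu_1\simeq\K_1$ the map $\alpha_{max}=\nu_{max}(\alpha)$ becomes $\K_1(\phi)$. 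Because these are \emph{natural} isomorphisms, the relevant squares remain commutative after the substitution, giving exactly the stated diagram; the isomorphism clause then follows verbatim from the last sentence of \autoref{thm:chasingCuK1}.

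For the second diagram, I would instead apply \autoref{cor:chasingCuK1ideal} to the ideal $\Cu_1(I)\in\Lat(\Cu_1(A))$, which corresponds to $I\in\Lat(A)$ under the lattice isomorphism of \autoref{cor:simpleideal}. The abstract ideal produced there is the smallest ideal of $\Cu_1(B)$ containing $\alpha(\Cu_1(I))$; the key identification is that, via the lattice isomorphism $\Lat(B)\simeq\Lat(\Cu_1(B))$ together with \autoref{lma:commutsquareideal}, this ideal is precisely $\Cu_1(J)$ with $J=\overline{B\phi(I)B}$. Once this is settled, the corollary gives a commutative diagram with split-exact rows for the restriction $\alpha_{|\Cu_1(I)}:\Cu_1(I)\to\Cu_1(J)$, and I would again translate the outer columns: \autoref{cor:commutsquareidealA}\,(i) states that $\nu_+(\alpha_{|\Cu_1(I)})={\alpha_0}_{|\Cu(I)}$ and $\nu_{max}(\alpha_{|I})=\alpha_I$, so the left and right vertical maps become exactly those appearing in the statement. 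The final isomorphism assertions then transfer directly from \autoref{cor:chasingCuK1ideal}.

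The main obstacle is the bookkeeping in the second part: matching the abstractly defined ideal and restriction maps with their concrete counterparts $\overline{B\phi(I)B}$, ${\alpha_0}_{|\Cu(I)}$ and $\alpha_I$. This is not deep---it is exactly the content of \autoref{cor:commutsquareidealA} combined with the lattice isomorphism---but it is where care is required to ensure that applying $\nu_+$ and $\nu_{max}$ to the abstract diagram and then invoking the natural isomorphisms of \autoref{thm:naturaltransfolma} yields the \emph{functorial} maps $\Cu(\phi)$, $\K_1(\phi)$ and their ideal restrictions, and not merely maps isomorphic to them.
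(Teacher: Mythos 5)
Your proposal is correct and takes essentially the same approach as the paper: the paper's entire proof reads ``Combine \autoref{thm:chasingCuK1} and \autoref{cor:chasingCuK1ideal} with \autoref{lma:commutsquareideal}.'' Your write-up simply fills in the transposition details that the paper leaves implicit, namely the natural isomorphisms of \autoref{thm:naturaltransfolma}, the lattice identification of the smallest ideal containing $\alpha(\Cu_1(I))$ with $\Cu_1(J)$, and the matching of the restricted vertical maps via \autoref{cor:commutsquareidealA}.
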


\vspace{-0,3cm}\begin{proof}
Combine \autoref{thm:chasingCuK1} and \autoref{cor:chasingCuK1ideal} with \autoref{lma:commutsquareideal}.
\end{proof}

\vspace{-0,15cm}
\end{document}